\documentclass[12pt]{article}
\usepackage{amsmath,amsfonts,amssymb,amsthm,color}

\oddsidemargin 0cm \evensidemargin 0cm
\topmargin0cm
\headheight0cm
\headsep0cm
\textheight23.5cm
\topskip2ex
\textwidth16cm

\numberwithin{equation}{section}

\newcommand{\I}{{\bf 1}}
\newtheorem{proposition}{Proposition}[section]
\newtheorem{theorem}[proposition]{Theorem}
\newtheorem{corollary}[proposition]{Corollary}
\newtheorem{lemma}[proposition]{Lemma}
\newtheorem{remark}[proposition]{Remark}
\newtheorem{example}[proposition]{Example}
\newcommand{\nc}{\newcommand}
\nc{\R}{{\mathbb R}}
\nc{\N}{{\mathbb N}}
\nc{\Z}{{\mathbb Z}}

\nc{\BP}{\mathbb{P}}
\nc{\BE}{\mathbb{E}}
\nc{\BQ}{\mathbb{Q}}
\nc{\BX}{\mathbb{X}}
\nc{\bN}{{\mathbf N}}
\nc{\cB}{{\mathcal B}}
\nc{\cX}{{\mathcal X}}
\nc{\cC}{{\mathcal C}}
\nc{\cR}{{\mathcal R}}
\nc{\dint}{{\rm d}}

\DeclareMathOperator{\BV}{{\mathbb Var}}
\DeclareMathOperator{\CV}{{\mathbb Cov}}

\begin{document}

\author{G\"unter Last\footnotemark[1]\,, Mathew D.\ Penrose\footnotemark[2]\,,
Matthias Schulte\footnotemark[3]\, and Christoph Th\"ale\footnotemark[4]}

\title{Moments and central limit theorems for some\\
multivariate Poisson functionals}
\date{}
\renewcommand{\thefootnote}{\fnsymbol{footnote}}
\footnotetext[1]{Institute of Stochastics, Karlsruhe Institute of Technology,
76128 Karlsruhe, Germany. E-mail: guenter.last@kit.edu}

\footnotetext[2]{Department of Mathematical Sciences, University of Bath,
Bath BA2 7AY, United Kingdom. E-mail: m.d.penrose@bath.ac.uk}

\footnotetext[3]{Department of Mathematics, University of Osnabr\"uck, 49076 Osnabr\"uck, Germany. \textit{Current address:} Institute of Stochastics, Karlsruhe Institute of Technology, 76128 Karlsruhe, Germany. E-mail: matthias.schulte@kit.edu}

\footnotetext[4]{Department of Mathematics, University of Osnabr\"uck, 49076 Osnabr\"uck, Germany. \textit{Current address:} Faculty of Mathematics, Ruhr University Bochum, 44801 Bochum, Germany. E-mail: christoph.thaele@rub.de}

\maketitle

\begin{abstract}
\noindent
This paper deals with Poisson processes on an arbitrary measurable space. Using a direct approach, we derive formulae for moments and cumulants of a vector of multiple Wiener-It\^o integrals with respect to the compensated Poisson process. Second, a multivariate central limit theorem is shown for a vector whose components admit a finite chaos expansion of the type of a Poisson U-statistic. The approach is based on recent results of Peccati et al.\ combining Malliavin calculus and Stein's method, and also yields Berry-Esseen type bounds. As applications, moment formulae and central limit theorems for general geometric functionals of intersection processes associated with a stationary Poisson process of $k$-dimensional flats in $\R^d$ are discussed.
\end{abstract}

\noindent
\textit{Key words:} Berry-Esseen type bounds; central limit theorem; intersection process;  multiple Wiener-It\^o integral;
Poisson process; Poisson flat process; product formula;
stochastic geometry; Wiener-It\^o chaos expansion

\vspace*{0.1cm}
\noindent
\textit{MSC (2010) subject classifications:} Primary: 60D05; 60H07 Secondary: 60F05; 60G55

\section{Introduction}\label{secintro}

Throughout this paper we denote by $({\Bbb X},{\cal X})$ a measurable space, equipped with a $\sigma$-finite measure $\lambda\neq 0$. A classical result by Wiener and It\^o says that if $F\equiv F(\eta)$ is a square integrable function of a Poisson process $\eta$ on $({\Bbb X},{\cal X})$ with intensity measure $\lambda$ (cf.\ \cite[Chapter 12]{Kallenberg}), then $F$ can be represented as an orthogonal $L^2$-series
\begin{equation}\label{chaos}
F={\Bbb E}F+\sum_{n=1}^\infty I_n(f_n),
\end{equation}
where $I_n(f_n)$ is the multiple ($n$-fold) Wiener-It\^o integral of a certain
symmetric function $f_n:{\Bbb X}^n\to{\Bbb R}$ that is square integrable with
respect to $\lambda^n$. For a proof of this result without any further assumptions
on the measure space $({\Bbb X},{\cal X},\lambda)$ we refer the
reader to \cite{LaPe11}. It turned
out that the \textit{chaos expansion} \eqref{chaos} is useful for many purposes.
For instance, it serves as mathematical basis for Malliavin calculus of
variations on the Poisson space and can be used to formulate and to prove
central limit theorems, see \cite{PSTU10,PeZheng10}.

The present paper deals with multivariate Poisson functionals $\big(F_t^{(1)},\ldots,F_t^{(\ell)}\big)$, $\ell\in{\Bbb N}$, where each component is of the form $F_t^{(i)}=F^{(i)}(\eta_t)$ and $\eta_t$ is a Poisson point process with an intensity measure of the form $\lambda_t=t\lambda$. Here, we are interested in the asymptotic regime that arises when the intensity parameter $t$ tends to infinity. Under the additional assumption that each $F_t^{(i)}$ is a \textit{U-statistic} of the Poisson process $\eta_t$, we prove formulae for the joint moments and cumulants and a multivariate central limit theorem.

The assumption that the functionals $F_t^{(i)}$ are Poisson U-statistics implies that their chaos expansions \eqref{chaos} terminate after finite numbers of terms, which is convenient for the application of Malliavin calculus. Univariate central limit theorems with bounds on the Wasserstein distance for Poisson functionals with finite Wiener-It\^o chaos expansions and, in particular, Poisson U-statistics are derived  in \cite{LRPeccati1,LRPeccati2,ReitznerSchulte2011} using a general result from \cite{PSTU10}. Our multivariate counterpart rests
on a multivariate analogue in \cite{PeZheng10}. By using a new truncation
argument and the special form of the Poisson functionals, our approach avoids
technical computations involving the product formula for multiple
Wiener-It\^o integrals that is used in the works mentioned previously.

As an application we study in detail geometric functionals of the intersection
process of order $m\in\{1,\ldots,d\}$ of a stationary Poisson $k$-flat process
in ${\Bbb R}^d$. We thereby considerably extend the results available in the
literature \cite{Hein09,HeinSS06} for the number of intersections
and the intersection volume. In our theory we can allow for very general
geometric functionals, for example, we do not require them to be
additive, translation-invariant or homogeneous. Furthermore, our central
limit theorems are quantitative in the sense that they provide rates of
convergence (with respect to a suitable distance).

For the asymptotic analysis of problems in stochastic geometry, two natural limiting regimes (among others) may be considered. On the one hand, one can fix the intensity of the underlying (Poisson) point process and increase the observation window in which everything takes place. On the other hand, one can fix this window and increase the intensity. We emphasize that these two regimes lead to limit theorems of different nature. Only in exceptional cases (such as for homogeneous functionals of Poisson $k$-flat processes considered in Section \ref{secflats}) it is possible to derive one limit theorem from the other. Our results deal with the situation of increasing intensity in case the functional of interest has the form of a Poisson U-statistic. As well as in the context described above, our theory can thus be applied directly to numbers of $k$-simplices of random simplical complexes \cite{Decreusefond} and to subgraph counting in random geometric graphs \cite{LRPeccati1,Penrose} with a fixed distance threshold. For problems that were previously considered in the literature for fixed intensity and increasing observation windows, such as the numbers of $k$-clusters \cite{BhattacharyaGhosh}, statistics of rather general random geometric graphs \cite{LRPeccati2}, or proximity functionals of non-intersecting $k$-flat processes \cite{ST13}, our results provide complementary central limit theorems for fixed windows and increasing intensity.

Another direction this paper deals with is formulae for mixed moments and cumulants, which in turn are based on identities for mixed moments and cumulants of multiple Wiener-It\^o integrals. We develop a quick approach to prove these formulae that have previously appeared in different generality in \cite{TaPecc09,Surg84}. The novelty of our proof is that it only makes use of elementary properties of the Poisson process (mainly the multivariate Mecke formula) and some combinatorial arguments, and deals directly with the expectation. In this way it avoids requiring the involved chaos expansion of such products.

The text is structured as follows. In Section \ref{secPreliminaries} we collect some basic definitions and background material. The moment formulae are presented in Section \ref{secmoments} while Section \ref{seclimits} deals with their asymptotic behaviour. Our multivariate central limit theorems for U-statistics of the underlying Poisson process are the content of Section \ref{seccentral} whereas in Section \ref{secflats} our results are applied to Poisson $k$-flat processes in ${\Bbb R}^d$.

\section{Preliminaries}\label{secPreliminaries}

In this paper all random objects are defined on a probability space $(\Omega,\mathcal{F},\BP)$. We interpret the Poisson process $\eta$ as a random element in the space $\bN:=\bN(\BX)$ of integer-valued (including $+\infty$) $\sigma$-finite measures $\mu$ on $\BX$ equipped with the smallest $\sigma$-field $\mathcal{N}$ making the mappings $\mu\mapsto\mu(B)$ measurable for all $B\in\mathcal{X}$. For $m\in\N$ and $\mu\in\bN$ we define a measure $\mu^{(m)}$ on $\BX^m$ by
\begin{align*}
\mu^{(m)}(B):=\idotsint\I_B(x_1,\ldots,x_m)
\Big(\mu - \sum_{j=1}^{m-1} \delta_{x_j}\Big)(\dint x_m)
\Big(\mu - \sum_{j=1}^{m-2} \delta_{x_j}\Big)(\dint x_{m-1}) \nonumber \\
\ldots (\mu - \delta_{x_1})(\dint x_2)\mu(\dint x_1),
\end{align*}
where $\delta_x$ is the Dirac measure located at a point $x\in\BX$. If $\mu$ is given as $\mu=\sum_I\delta_{x_i}$ for some countable index set $I$ and $x_i\in\BX$, $i\in I$, then
$$
\int f\,\dint\mu^{(m)}=\sideset{}{^{\ne}}\sum_{i_1,\ldots,i_m\in I} f(x_{i_1},\ldots,x_{i_m}),
$$
where $f$ is any non-negative measurable function on $\BX^m$ and where the superscript $\ne$ indicates that we sum over $m$-tuples of disjoint indices.

We will use the multivariate Mecke-formula (see e.g.\ \cite{LaPe11})
\begin{equation}\label{Meckem}
\begin{split}
\BE\int h(\eta,x_1,\ldots,x_m)\,&\eta^{(m)}\big(\dint(x_1,\ldots,x_m)\big)\\
&=\BE\int h(\eta+\delta_{x_1}+\ldots+\delta_{x_m},x_1,\ldots,x_m)\,
\lambda^m\big(\dint(x_1,\ldots,x_m)\big),
\end{split}
\end{equation}
which holds for all $h:\bN\times\BX^m\rightarrow\R$ for which one (and then also the other) side makes sense.

For any integer $n\ge 1$ let $L_s^1(\lambda^n)$ denote the set of all measurable and symmetric functions $f:\BX^n\rightarrow \R$ that are integrable with respect to $\lambda^n$. For $f\in L_s^1(\lambda^n)$ define the pathwise multiple Wiener-It\^o integral by
\begin{align}\label {prodIWI}
I_n(f):=\sum_{J \subset [n]}
(-1)^{n-|J|}\iint f(x_1,\dots,x_n)\eta^{(|J|)} (\dint x_J)
\lambda^{n-|J|}(\dint x_{J^c}),
\end{align}
where $[n]:=\{1,\dots,n\}$, $J^c:=[n]\setminus J$, $x_J:=(x_j)_{j\in J}$ and where $|J|$ denotes cardinality of $J$ (the inner integral is interpreted as $f(x_1,\ldots,x_n)$ in the case where $J=\emptyset$). By \eqref{Meckem}, $I_n(f)$ is a well-defined integrable random variable with $\BE I_n(f)=0$. If  $f\in L_s^1(\lambda^n)\cap L^2(\lambda^n)$, this pathwise definition coincides with the (classical)  definition of the multiple Wiener-It\^o integral for square integrable functions. (This fact can be derived, for instance, from equation (3.1) in \cite{LaPe11}.) The multiple Wiener-It\^o integral $I_n(f)$ for (symmetric) $f\in L^2(\lambda^n)$ is defined by an extension of the pathwise definition for $L^1$-functions to the space of all square integrable random variables. It has also mean zero and satisfies the orthogonality and isometry relations
\begin{align}\label{orth}
\BE I_m(g)I_n(h)=\I\{m=n\}\,n!\,\langle g , h \rangle_n,
\quad  m,n\ge 1,
\end{align}
for all (symmetric) $g\in L^2(\lambda^m)$ and $h\in L^2(\lambda^n)$, where $\langle\,\cdot\,,\,\cdot\,\rangle_n$ denotes the scalar product in $L^2(\lambda^n)$.

For $x\in\BX$ the {\em difference operator} $D_x$ is given as follows. For any measurable $F:\bN\rightarrow\R$ the function $D_xF$ on $\bN$ is defined by $D_xF(\mu):=F(\mu+\delta_x)-F(\mu)$, $\mu\in\bN$. For $n\ge 2$ and $(x_1,\ldots,x_n)\in\BX^n$ we define a function
$D^{n}_{x_1,\ldots,x_n}F:\bN\rightarrow\R$ by an iterated application of the difference operator $D$, that is, inductively by
\begin{align*}
D^{n}_{x_1,\ldots,x_{n}}F:=D^1_{x_{1}}D^{n-1}_{x_2,\ldots,x_{n}}F,
\end{align*}
where $D^1_x:=D_x$ and $D^0F:=F$. Under the assumption $\BE F(\eta)^2<\infty$ it was proved in \cite{LaPe11} that $D^{n}_{x_1,\ldots,x_{n}}F(\eta)$ is integrable for $\lambda^n$-a.e.\ $(x_1,\ldots,x_n)$ and that
\begin{align*}
T_n F(x_1,\ldots,x_n):=\BE D^n_{x_1,\ldots,x_n} F(\eta),
\quad (x_1,\ldots,x_n)\in\BX^n,
\end{align*}
defines a symmetric function in $L^2(\lambda^n)$. Moreover, we have the Wiener-It\^o chaos expansion
\begin{align}\label{chaos2}
F(\eta)={\Bbb E}F+\sum^\infty_{n=1}\frac{1}{n!}I_n(T_nF),
\end{align}
where the series converges in $L^2(\BP)$. Hence \eqref{chaos} holds with $f_n=\frac{1}{n!}T_nF$.

\section{Moments and cumulants}\label{secmoments}

Let $n\in\N$. A {\em subpartition} of $[n]$ is a family of disjoint and
non-empty subsets of $[n]$, which we call blocks. A {\em partition} of $[n]$
is a subpartition $\sigma$ of $[n]$ such that $\cup_{J\in\sigma}J=[n]$.
We denote by $\Pi_n$ (respectively $\Pi^*_n$) the system of all partitions
(respectively subpartitions) of $[n]$. The cardinality of
$\sigma\in \Pi_n^*$ (i.e.\ the number of blocks of $\sigma$)
is denoted by $|\sigma|$, while the cardinality of $\cup_{J\in\sigma}J$ is
denoted by $\|\sigma\|$. For any function $f:\BX^n\rightarrow\R$ and
$\sigma\in \Pi_n^*$ we define $f_\sigma:\BX^{|\sigma|+n-\|\sigma\|}\rightarrow\R$ by
identifying the arguments belonging to the same $J\in\sigma$.
(The arguments $x_1,\dots,x_{|\sigma|+n-\|\sigma\|}$ have to be inserted in the
order of occurrence.) In the case $n=4$ and $\sigma=\{\{2,3\},\{4\}\}$,
for instance, we have $f_\sigma(x_1,x_2,x_3)=f(x_1,x_2,x_2,x_3)$.

Consider $\ell,n_1,\dots,n_\ell\in\N$. Define $n:=n_1+\dots+n_\ell$ and
\begin{align}\label{Ji}
J_i:=\{j: n_1+\dots+n_{i-1}<j\le n_1+\dots+n_{i}\},\quad i=1,\dots,\ell.
\end{align}
Let $\pi:=\{J_i:1\le i\le \ell\}$ and let $\Pi(n_1,\dots,n_\ell)\subset \Pi_n$
(respectively $\Pi^*(n_1,\dots,n_\ell)\subset \Pi_n^*$) denote the set of all
$\sigma\in\Pi_n$ (respectively $\sigma\in\Pi_n^*$) with $|J\cap J'|\le 1$
for all $J\in\pi$ and all $J'\in \sigma$. Let $\Pi_{\ge 2}(n_1,\dots,n_\ell)$
(respectively $\Pi_{=2}(n_1,\dots,n_\ell)$) denote the set of all
$\sigma\in \Pi(n_1,\dots,n_\ell)$ with $|J|\ge 2$ (respectively $|J|=2$)
for all $J\in\sigma$. It is instructive to visualize the pair $(\pi,\sigma)$ as a
{\em diagram} with rows $J_1,\dots,J_\ell$, where the elements in each $J\in\sigma$
are encircled by a closed curve; see  \cite[Chapter 4]{TaPecc09} for more details
on such diagrams. Since the blocks of a $\sigma\in\Pi(n_1,\dots,n_\ell)$ are not
allowed to contain more than one entry from each row, the diagram $(\pi,\sigma)$
is called {\em non-flat} in \cite{TaPecc09}. Any $\sigma\in\Pi_{\ge 2}(n_1,\dots,n_\ell)$
induces a partition $\sigma^*\in\Pi_\ell$: $\sigma^*$ is the finest
partition of $[\ell]$ such that two numbers $i,j\in[\ell]$ are
in the same block of $\sigma^*$ if $J_i$ and $J_j$ are both intersected by
the same block of $\sigma$. Let $\tilde\Pi_{\ge 2}(n_1,\dots,n_\ell)$ be the
set of all $\sigma\in\Pi_{\ge 2}(n_1,\dots,n_\ell)$ such that $|\sigma^*|=1$.

The {\em tensor product} $\otimes_{i=1}^\ell f_i$ of functions
$f_i:\BX^{n_i}\rightarrow\R$, $i\in\{1,\dots,\ell\}$, is the function
from $\BX^n$ to $\R$ which maps each $(x_1,\dots,x_n)$ to $\prod_{i=1}^n f_i(x_{J_i})$.
In case that $f_1=\hdots=f_{\ell}=f$ we write $f^{\otimes \ell}$
instead of $\otimes_{i=1}^{\ell} f_i$.

The {\em joint cumulant} $\gamma\big(X_1,\dots,X_\ell\big)$ of $\ell\ge 1$
random variables $X_1,\dots,X_\ell$ is defined as
\begin{align*}
\gamma\big(X_1,\dots,X_\ell\big):=
(-\mathbf{i})^\ell\frac{\partial^\ell}{\partial z_1\dots \partial z_\ell}
\log\BE \Big[\exp[\mathbf{i}(z_1X_1+\dots+z_\ell X_\ell)]\Big]\Big|_{z_1=\dots =z_\ell=0},
\end{align*}
where $\mathbf{i}$ is the imaginary unit. This cumulant is well-defined if
$\prod_{j\in I} X_j$ is integrable for all $I\subset[\ell]$.
The $\ell$-th cumulant of a single random variable $X$ is defined by
$\gamma_\ell(X):=\gamma\big(X,\dots,X\big)$, where $X$ appears $\ell$ times.

The following result generalizes \cite[Corollary 7.4.1]{TaPecc09} and a
consequence of \cite{Surg84} to the case of more general Poisson processes and
integrands. In contrast to \cite{TaPecc09,Surg84}, we allow that the intensity
measure has atoms. Moreover, we avoid the assumption in \cite[Corollary 7.4.1]{TaPecc09} that the integrands are
simple functions. While the results in
\cite{TaPecc09,Surg84} are derived via formulae for the
Wiener-It\^o chaos expansion of a product of multiple Wiener-It\^o integrals,
here we take a direct approach, which relies only on \eqref{Meckem} and
some combinatorial arguments.

\begin{theorem}\label{thmoments} Let $f_i\in L_s^1(\lambda^{n_i})$,
$i=1,\dots,\ell$, where $\ell,n_1,\dots,n_\ell\in\N$. Assume that
\begin{align}\label{ass}
  \int (\otimes_{i=1}^\ell |f_i|)_\sigma \,
\dint\lambda^{|\sigma|}<\infty,\quad \sigma\in\Pi(n_1,\dots,n_\ell).
\end{align}
Then
\begin{align}\label{moments}
\BE \prod^\ell_{i=1}I_{n_i}(f_i)&=\sum_{\sigma\in\Pi_{\ge 2}(n_1,\dots,n_\ell)}
\int (\otimes_{i=1}^\ell f_i)_\sigma\, \dint\lambda^{|\sigma|},\\
\label{cumulants}
\gamma\big(I_{n_1}(f_1),\dots,I_{n_\ell}(f_\ell)\big)&=
\sum_{\sigma\in\tilde\Pi_{\ge 2}(n_1,\dots,n_\ell)}
\int (\otimes_{i=1}^\ell f_i)_\sigma\, \dint\lambda^{|\sigma|}.
\end{align}
\end{theorem}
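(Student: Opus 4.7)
My plan is to expand each factor using the pathwise definition (\ref{prodIWI}), take expectation via the Mecke formula (\ref{Meckem}), and regroup by partitions, relying entirely on elementary bookkeeping rather than the product formula for multiple Wiener-It\^o integrals. Writing $I_{n_i}(f_i)=\sum_{K_i\subset[n_i]}(-1)^{n_i-|K_i|}\int\cdots\,\eta^{(|K_i|)}(\dint x_{K_i})\,\lambda^{n_i-|K_i|}(\dint x_{K_i^c})$, the product $\prod_i I_{n_i}(f_i)$ becomes a sum over tuples $(K_1,\dots,K_\ell)$ of mixed factorial-measure integrals with overall sign $(-1)^{n-|K|}$, where $K:=K_1\sqcup\cdots\sqcup K_\ell\subset[n]$ using the row-indexing of (\ref{Ji}).

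The first technical step decomposes the product of factorial measures $\prod_i\eta^{(|K_i|)}(\dint x^{(i)}_{K_i})$ into a sum over \emph{non-flat} partitions $\tau$ of $K$---that is, partitions whose blocks each meet every row $K_i$ in at most one element (this is forced by the distinctness enforced within each $\eta^{(|K_i|)}$):
\[
\prod_i\eta^{(|K_i|)}(\dint x^{(i)}_{K_i})=\sum_{\tau}\eta^{(|\tau|)}(\dint y),
\]
with the variables in each block of $\tau$ identified. Applying (\ref{Meckem}) to the remaining $\eta^{(|\tau|)}$-integral converts it to $\lambda^{|\tau|}$ (the integrand no longer depends on $\eta$, so the added Dirac shift has no effect), and the absolute-integrability assumption (\ref{ass}) legitimizes Fubini throughout.

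After these two steps, each pair $((K_i),\tau)$ contributes $(-1)^{n-|K|}\int(\otimes_i f_i)_\sigma\,\dint\lambda^{|\sigma|}$, where $\sigma$ is the partition of $[n]$ formed by the blocks of $\tau$ together with singletons $\{j\}$ for $j\in[n]\setminus K$; clearly $\sigma\in\Pi(n_1,\dots,n_\ell)$. Reindexing by $\sigma$ and tracing preimages: the non-singleton blocks of $\sigma$ must coincide with blocks of $\tau$ (so their indices lie in $K$), while each singleton block $\{j\}$ of $\sigma$ admits a binary choice (either $j\in K_i$ or $j\in K_i^c$). Summing the sign $(-1)^{n-|K|}=(-1)^{b_1(\sigma)-|S|}$ over the $2^{b_1(\sigma)}$ choices (with $b_1(\sigma)$ the number of singletons of $\sigma$ and $S$ the singletons chosen to lie in $K$) yields $(1-1)^{b_1(\sigma)}$, which vanishes unless $b_1(\sigma)=0$. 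Hence only $\sigma\in\Pi_{\ge 2}(n_1,\dots,n_\ell)$ survive, each with coefficient $1$, which is (\ref{moments}).

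For (\ref{cumulants}) I apply the classical moment-to-cumulant inversion
\[
\gamma(X_1,\dots,X_\ell)=\sum_{\rho\in\Pi_\ell}(-1)^{|\rho|-1}(|\rho|-1)!\prod_{B\in\rho}\BE\prod_{i\in B}X_i,
\]
substitute (\ref{moments}) into each block expectation, and glue the per-block partitions into a single $\sigma\in\Pi_{\ge 2}(n_1,\dots,n_\ell)$ satisfying $\sigma^*\le\rho$. Exchanging summations, the inner sum $\sum_{\rho\ge\sigma^*}(-1)^{|\rho|-1}(|\rho|-1)!$ collapses by the standard M\"obius identity $\sum_{\rho\in\Pi_m}(-1)^{|\rho|-1}(|\rho|-1)!=\I\{m=1\}$ (applied to the quotient lattice $\Pi_{|\sigma^*|}$) to $\I\{|\sigma^*|=1\}$, isolating exactly $\tilde\Pi_{\ge 2}(n_1,\dots,n_\ell)$. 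I expect the main obstacle to lie in the combinatorial bookkeeping of the first two steps---juggling the subsets $K_i$, the non-flat partition $\tau$, and the induced $\sigma$ while tracking signs and verifying that (\ref{ass}) supports every rearrangement---whereas once this setup is in place, the sign cancellation and the M\"obius collapse both follow essentially automatically.
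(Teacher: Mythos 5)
Your proposal is correct and follows essentially the same route as the paper: expand each factor via the pathwise definition \eqref{prodIWI}, decompose the product of factorial measures over non-flat (sub)partitions, apply the Mecke formula \eqref{Meckem} to reduce to $\lambda$-integrals, and kill all partitions with singleton blocks through the binomial sign cancellation $(1-1)^{b_1(\sigma)}$ --- which is exactly the paper's grouping of $\Pi^*(n_1,\dots,n_\ell)$ into the classes $\Pi_1(\sigma)$. The only (inessential) divergence is in \eqref{cumulants}, where you invoke the direct M\"obius-inversion form of the moment--cumulant relation and collapse $\sum_{\rho\ge\sigma^*}(-1)^{|\rho|-1}(|\rho|-1)!$ on the quotient lattice, whereas the paper runs an induction on $\ell$ via the inverse relation; both rest on the same bijection between $\sigma\in\Pi_{\ge 2}(n_1,\dots,n_\ell)$ and the pair $(\sigma^*,(\sigma_J)_{J\in\sigma^*})$.
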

\begin{proof} We abbreviate $f:=\otimes_{i=1}^\ell f_i$.
The definition \eqref{prodIWI} and Fubini's theorem imply that
\begin{equation}\label{12}
\begin{split}
\prod^\ell_{i=1}I_{n_i}(f_i)=
\sum_{I\subset[n]}(-1)^{n-|I|}&\idotsint f(x_1,\dots,x_n)\\
&\eta^{(|I\cap J_1|)}(\dint x_{I\cap J_1})\dots
\eta^{(|I\cap J_\ell|)}(\dint x_{I\cap J_\ell})
\lambda^{n-|I|}(\dint x_{I^c}),
\end{split}
\end{equation}
where $I^c:=[n]\setminus I$, and where we use definition \eqref{Ji} of $J_i$. For fixed $I\subset[n]$ we may split the
above integration according to $\sigma\in \Pi^*(n_1,\dots,n_\ell)$, where $\cup_{J\in\sigma}J=I$. For any such $\sigma$ we integrate (i.e.\ sum) over those $(x_1,\dots,x_n)$ satisfying $x_i=x_j$ whenever $i$ and $j$ belong to the same block of $\sigma$ but not otherwise. By \eqref{Meckem} applied with $h(\eta,y_1,\ldots,y_{|\sigma|})=f(x_1,\ldots,x_m)$ taking $x_i=y_j$ for $i$ in the $j$-th block of $\sigma$, the contribution of $\sigma$ to the expectation of the right-hand side of \eqref{12} equals $(-1)^{n-\|\sigma\|}\int f_\sigma\, \dint\lambda^{|\sigma|+n-\|\sigma\|}$. Therefore,
\begin{align}\label{m1}
\BE \prod^\ell_{i=1}I_{n_i}(f_i)=
\sum_{\sigma\in\Pi^*(n_1,\dots,n_\ell)}(-1)^{n-\|\sigma\|}
\int f_\sigma\, \dint\lambda^{|\sigma|+n-\|\sigma\|}.
\end{align}
By assumption \eqref{ass} all of these integrals are finite. Take a $\sigma\in\Pi^*(n_1,\dots,n_\ell)$ with $|J|\geq 2$ for all $J\in\sigma$ and consider the set $\Pi_1(\sigma)$ of all $\sigma_1\in\Pi^*(n_1,\dots,n_\ell)$ such that $\sigma\subset \sigma_1$ and
$|J|\le 1$ for all $J\in \sigma_1\setminus\sigma$. (Note that $\sigma\in\Pi_1(\sigma)$.) Observe that $\int f_{\sigma_1}\, \dint\lambda^{|\sigma_1|+n-\|\sigma_1\|}=\int f_{\sigma}\, \dint\lambda^{|\sigma|+n-\|\sigma\|}$ for all $\sigma_1\in\Pi_1(\sigma)$. Moreover, for $n-\|\sigma\|\ge 1$ we have that $$\sum_{\sigma_1\in\Pi_1(\sigma)}(-1)^{n-\|\sigma_1\|}=0.$$ Since every $\tau\in \Pi^*(n_1,\hdots,n_\ell)$ has a unique $\sigma\in\Pi^*(n_1,\hdots,n_\ell)$ with $|J|\geq 2$ for all $J\in\sigma$ such that $\tau\in\Pi_1(\sigma)$, we can partition $\Pi^*(n_1,\hdots,n_\ell)$ into the sets $\Pi_1(\sigma)$, $\sigma\in\Pi^*(n_1,\hdots,n_\ell)$ with $|J|\geq 2$ for all $J\in\sigma$. As shown above the sums over all $\sigma_1\in\Pi_1(\sigma)$ with $\|\sigma\|<n$ vanish and only the integrals related to partitions $\sigma\in \Pi_{\geq 2}(n_1,\hdots,n_\ell)$ remain. Therefore, \eqref{m1} implies the asserted identity \eqref{moments}.

We now prove \eqref{cumulants} by induction over $\ell$.  Since $\gamma\big(I_{n_1}(f_1),I_{n_2}(f_2)\big)=\BE I_{n_1}(f_1)I_{n_2}(f_2)$ and $\Pi_{\geq 2}(n_1,n_2)=\tilde{\Pi}_{\geq 2}(n_1,n_2)$, the identity \eqref{cumulants} is true for $\ell=2$, see \eqref{moments}. For $\ell\geq 3$, we obtain by the inversion formula expressing the $\ell$-th moment in terms of lower order cumulants (see e.g.\  \cite[Proposition 3.2.1]{TaPecc09}), formula \eqref{moments} and the assumption of the induction
\begin{equation}\label{inversion2}
\begin{split}
&\gamma\big(I_{n_1}(f_1),\hdots,I_{n_\ell}(f_\ell)\big)\\
 &=\BE \prod_{j=1}^\ell I_{n_j}(f_j)-\sum_{\pi\in\Pi_\ell,|\pi|>1}
\prod_{J\in\pi}\gamma\big((I_{n_j}(f_j))_{j\in J}\big)\\
&= \sum_{\sigma\in\Pi_{\geq 2}(n_1,\hdots,n_\ell)}\int f_{\sigma}\,\dint\lambda^{|\sigma|}
-\sum_{\pi\in\Pi_\ell,|\pi|>1}\prod_{J\in\pi}\sum_{\sigma_J\in\tilde{\Pi}_{\geq 2}(J)}
\int (\otimes_{j\in J}f_j)_{\sigma_J}\,\dint\lambda^{|\sigma_J|}.
\end{split}
\end{equation}
Here $\tilde{\Pi}_{\geq 2}(J)$ is defined in a similar way to $\tilde{\Pi}_{\geq 2}(n_1,\hdots,n_\ell)$. Now we use the fact that every partition $\sigma\in\Pi_{\geq 2}(n_1,\hdots,n_\ell)$ determines (in the obvious way) uniquely a partition $\pi=\sigma^*\in\Pi_\ell$ and a collection of partitions $\sigma_J\in\tilde{\Pi}_{\geq 2}(J), J\in\pi$, and vice versa. Combining this with Fubini's theorem, we have
\begin{eqnarray*}
\sum_{\pi\in\Pi_\ell,|\pi|>1}\prod_{J\in\pi}\sum_{\sigma_J\in\tilde{\Pi}_{\geq 2}(J)}
\int (\otimes_{j\in J}f_j)_{\sigma_J}\,\dint\lambda^{|\sigma_J|}
&=&\sum_{\sigma\in\Pi_{\geq 2}(n_1,\hdots,n_\ell),|\sigma^*|>1}
\int (\prod_{J\in\sigma^*}\otimes_{j\in J}f_j)_\sigma\,\dint\lambda^{|\sigma|}\\
&=&\sum_{\sigma\in\Pi_{\geq 2}(n_1,\hdots,n_\ell),|\sigma^*|>1}
\int f_\sigma\,\dint\lambda^{|\sigma|}.
\end{eqnarray*}
Hence in \eqref{inversion2} only the partitions $\sigma\in\Pi_{\geq 2}(n_1,\hdots,n_\ell)$ with $|\sigma^*|=1$ remain. In our notation, these are exactly the partitions in $\tilde{\Pi}_{\geq 2}(n_1,\hdots,n_\ell)$.
\end{proof}

\begin{remark}\label{remark:condition}\rm
The assumption \eqref{ass} is obviously satisfied if $f_i$ is bounded and $\lambda^{n_i}(\{f_i\neq 0\})<\infty$ for $i=1,\hdots,\ell$, which is the case for our examples in Section \ref{secflats}. But the assumption also holds under the following conditions.
Consider, as in Theorem \ref{thmoments}, measurable functions $f_i:\BX^{n_i}\rightarrow\R$, for $i=1,\dots,\ell$. Assume for any $i$ that $\{f_i\ne 0\}\subset B^{n_i}$, where $B\in\cX$ satisfies $\lambda(B)<\infty$. For any $i$, if $f_i\in L^\ell(\lambda^{n_i})$ then $f_i\in L^1(\lambda^{n_i})$ and \eqref{ass} holds. The second assertion  follows from the multivariate version of H\"older's inequality. In fact, if $\sigma\in\Pi(n_1,\dots,n_\ell)$, then
\begin{align*}
\left(\int (\otimes_{i=1}^\ell |f_i|)_\sigma \,\dint\lambda^{|\sigma|}\right)^\ell
\le \lambda(B)^{|\sigma|-n_1}\int |f_1|^\ell\, \dint\lambda^{n_1}
\cdot\ldots\cdot \lambda(B)^{|\sigma|-n_\ell}\int |f_\ell|^\ell\, \dint\lambda^{n_\ell}.
\end{align*}
Another sufficient condition for the assumptions of Theorem \ref{thmoments} is the existence of a function $g\in L^1(\lambda)\cap L^\ell(\lambda)$ such that $|f_i|\le g^{\otimes n_i}$ for any $i$. In this case we have for $\sigma\in\Pi(n_1,\dots,n_\ell)$ that
\begin{align*}
\int (\otimes_{i=1}^\ell |f_i|)_\sigma \,\dint\lambda^{|\sigma|}
\le \int g^{\otimes i_1}\,\dint\lambda \cdot\ldots\cdot \int g^{\otimes i_{|\sigma|}}\,\dint\lambda,
\end{align*}
where $i_1,\ldots,i_{|\sigma|}\le \ell$ are the cardinalities of the blocks of $\sigma$.
\end{remark}

\begin{example}\label{ex11}\rm Let $f\in L^1_s(\lambda^2)$ and consider
Theorem \ref{thmoments} in the case $\ell=2$, $n_1=n_2=2$ and $f_1=f_2=f$.
Then it is easy to see that assumption \eqref{ass} requires
$f\in L^2(\lambda^2)$ and
\begin{align*}
\int\bigg[\int |f(x_1,x_2)|\,\lambda(\dint x_2)\bigg]^2\,\,\lambda(\dint x_1)<\infty.
\end{align*}
Formula \eqref{moments} boils down to the isometry
relation $\BE  I_2(f)^2=2\langle f,f\rangle_2$. This shows that
assumption \eqref{ass} is not necessary for \eqref{moments}.
\end{example}

\begin{example}\label{ex12}\rm Let $f\in L^1_s(\lambda^2)$,
$g\in L^1(\lambda)$ and consider
Theorem \ref{thmoments} in the case $\ell=3$, $n_1=n_2=2$, $f_1=f_2=f$, $n_3=1$ and
$f_3=g$. Assumption \eqref{ass} then requires $f$ to satisfy the same
integrability conditions as in Example \ref{ex11}, as well as
\begin{align*}
&\int (|f(x_1,x_2)|+f(x_1,x_2)^2)
|g(x_1)|\,\lambda^2(\dint (x_1,x_2))<\infty,\\
&\int |f(x_1,x_2)f(x_2,x_3)|(|g(x_1)|+ |g(x_2)|)
\,\lambda^3\big(\dint (x_1,x_2,x_3)\big)<\infty.
\end{align*}
Formula \eqref{moments} means that
\begin{align*}
\BE[I_2(f)^2I_1(g)]=4\int f(x_1,x_2)^2g(x_1)\,\lambda^3\big(\dint (x_1,x_2)\big).
\end{align*}
Note that we do not need to assume $g$ to be square-integrable with respect
to $\lambda$.
\end{example}

\begin{corollary}\label{c2}
Let $f_n\in L^1_s(\lambda^n)$, $n\in\N$, and let $\ell\in\N$ and assume that
\begin{align}\label{ass22}
\int (\otimes^\ell_{i=1}|f_{n_i}|)_\sigma\,\dint\lambda^{|\sigma|}<\infty,
\quad \sigma\in\Pi(n_1,\dots,n_\ell),\, n_1,\dots,n_\ell\in\N.
\end{align}
Assume further that $\BE (\sum^\infty_{n=1}|I_n(f_n)|)^\ell<\infty$. Then the $\ell$-th moment and the $\ell$-th cumulant of
$F:=\sum^\infty_{n=1}I_n(f_n)$ are given by
\begin{align}\label{momchaos}
\BE F^\ell&=
\sum_{n_1,\dots,n_\ell\in\N}\;\sum_{\sigma\in\Pi_{\ge 2}(n_1,\dots,n_\ell)}
\int (\otimes_{i=1}^\ell f_{n_i})_\sigma\,\dint\lambda^{|\sigma|},\\
\label{cumchaos}
\gamma_\ell(F)&=
\sum_{n_1,\dots,n_\ell\in\N}\;\sum_{\sigma\in\tilde\Pi_{\ge 2}(n_1,\dots,n_\ell)}
\int (\otimes_{i=1}^\ell f_{n_i})_\sigma\,\dint\lambda^{|\sigma|}.
\end{align}
\end{corollary}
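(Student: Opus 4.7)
The plan is a truncation argument: set $F_N:=\sum_{n=1}^N I_n(f_n)$ and reduce both \eqref{momchaos} and \eqref{cumchaos} to Theorem \ref{thmoments} by passing to the limit $N\to\infty$. The hypothesis $\BE(\sum_n|I_n(f_n)|)^\ell<\infty$ forces the random variable $G:=\sum_{n=1}^\infty|I_n(f_n)|$ to be finite $\BP$-almost surely and an element of $L^\ell(\BP)$. Hence $F_N\to F$ almost surely with $|F_N|\le G$, and dominated convergence yields $F_N\to F$ in $L^\ell(\BP)$; in particular $\BE F_N^k\to \BE F^k$ for every $1\le k\le\ell$.

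For \eqref{momchaos}, expanding the finite power,
\begin{equation*}
F_N^\ell=\sum_{n_1,\ldots,n_\ell\in[N]}\prod_{i=1}^\ell I_{n_i}(f_{n_i}),
\end{equation*}
taking expectations, and applying Theorem \ref{thmoments} to each of the finitely many terms (whose hypothesis \eqref{ass} is supplied by \eqref{ass22}) gives
\begin{equation*}
\BE F_N^\ell=\sum_{n_1,\ldots,n_\ell\in[N]}\;\sum_{\sigma\in\Pi_{\ge 2}(n_1,\ldots,n_\ell)}\int(\otimes_{i=1}^\ell f_{n_i})_\sigma\,\dint\lambda^{|\sigma|}.
\end{equation*}
Letting $N\to\infty$ then establishes \eqref{momchaos}, with the multiple series on the right interpreted as the limit of its partial sums over the cubes $[N]^\ell$.

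For \eqref{cumchaos} I would exploit multilinearity of the joint cumulant in each of its $\ell$ arguments to write
\begin{equation*}
\gamma_\ell(F_N)=\gamma\big(\underbrace{F_N,\ldots,F_N}_{\ell}\big)=\sum_{n_1,\ldots,n_\ell\in[N]}\gamma\big(I_{n_1}(f_{n_1}),\ldots,I_{n_\ell}(f_{n_\ell})\big)
\end{equation*}
and then substitute formula \eqref{cumulants} for each of the finitely many joint cumulants on the right. Since cumulants of order at most $\ell$ are polynomials in moments of order at most $\ell$, the $L^\ell$-convergence just established implies $\gamma_\ell(F_N)\to\gamma_\ell(F)$, and letting $N\to\infty$ yields \eqref{cumchaos} in the same cubic-partial-sum sense.

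The main care required is in the interpretation of the iterated series on the right-hand sides: absolute convergence of the unordered multi-sum is not automatically guaranteed by \eqref{ass22} alone, so both \eqref{momchaos} and \eqref{cumchaos} should be read as limits over the cubes $[N]^\ell$. Multilinearity of the joint cumulant is standard (it follows from differentiating the logarithm of the joint characteristic function) and is applied only on the finite sum $F_N$, so no further technicalities arise there.
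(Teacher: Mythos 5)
Your proposal is correct and follows essentially the same route as the paper's own proof: truncate to $F_N=\sum_{n=1}^N I_n(f_n)$, apply Theorem \ref{thmoments} termwise to the expanded power (respectively, use multilinearity of the joint cumulant), and pass to the limit via dominated convergence for the moments and via the fact that $\gamma_\ell$ is a fixed polynomial in the moments of order at most $\ell$ for the cumulants. Your explicit remark that the right-hand sides must be read as limits of partial sums over the cubes $[N]^\ell$ is a sensible clarification of a point the paper leaves implicit, but it does not change the argument.
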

\begin{proof} Let $m\in\N$ and $F_m:=I_1(f_1)+\dots+I_m(f_m)$. Expanding $(F_m)^\ell$ and using \eqref{moments} gives
\begin{align*}
\BE F_m^\ell&=\sum^m_{n_1,\dots,n_\ell=1}
\BE (I_{n_1}(f_{n_1})\cdot\ldots\cdot I_{n_l}(f_{n_l}))\\
&=\sum^m_{n_1,\dots,n_\ell=1}\;\sum_{\sigma\in\Pi_{\ge 2}(n_1,\dots,n_\ell)}
\int (\otimes_{i=1}^\ell f_{n_i})_\sigma\,\dint\lambda^{|\sigma|}.
\end{align*}
Assumption $\BE (\sum^\infty_{n=1}|I_n(f_n)|)^\ell<\infty$ and dominated convergence imply \eqref{momchaos} for the infinite case. By the multilinearity of joint cumulants and \eqref{cumulants},
\begin{align*}
\gamma_\ell(F_m)&=
\sum^m_{n_1,\dots,n_\ell=1}\;\sum_{\sigma\in\tilde\Pi_{\ge 2}(n_1,\dots,n_\ell)}
\int (\otimes_{i=1}^\ell f_{n_i})_\sigma\,\dint\lambda^{|\sigma|}.
\end{align*}
Since $\gamma_\ell(F_m)$ is a polynomial in the moments $\BE F_m^j$, $j\in\{1,\dots,\ell\}$, whose coefficients are independent of $m$ (or $F_m$), we can again use dominated convergence to conclude the result \eqref{cumchaos} for the infinite case.
\end{proof}

\begin{remark}\rm If in Corollary \ref{c2} the number of non-vanishing functions $f_n\not\equiv 0$ is finite, then the assumption
$\BE (\sum^\infty_{n=1}|I_n(f_n)|)^\ell<\infty$ is implied by \eqref{ass22}.
\end{remark}

Let $f\in L^2(\lambda^2)$ be given by $f(x_1,x_2):=\I\{x_1\in B,x_2\in B\}$,
where $\lambda(B)<\infty$. By \eqref{prodIWI},
$$
I_2(f)=\eta(B)(\eta(B)-1)-2\eta(B)\lambda(B)+\lambda(B)^2.
$$
A straightforward calculation shows that
$\BE \exp(s I_2(f))=\infty$ for all $s>0$, whenever $\lambda(B)>0$.
Our next result shows that this is a quite general property
of Poisson functionals.

\begin{corollary}\label{c2a}
Let $F=\sum_{n=1}^\infty I_n(f_n)$ with $f_n\in L^1_s(\lambda^n)$, $n\in\N$, and assume that
$$\int (\otimes^\ell_{i=1}|f_{n_i}|)_\sigma \, \dint\lambda^{|\sigma|}<\infty\quad{\rm for\ all}\quad\sigma\in\Pi(n_1,\dots,n_\ell)\quad{\rm with}\quad n_1,\dots,n_\ell\in\N$$ and that  $\BE(\sum_{n=1}^\infty |I_n(f_n)|)^\ell<\infty$
for all $\ell\in\N$.
In addition, suppose that $f_n\geq 0$ for all $n\in\N$ and that there is an
$n_0\geq 2$ with $\|f_{n_0}\|_{n_0}>0$. Then
$\BE\exp(sF)=\infty$ for all $s>0$.
\end{corollary}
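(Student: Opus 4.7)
The plan is to show that $\BE(F^+)^\ell/\ell!$ grows super-exponentially along odd $\ell$ and then invoke the pointwise bound $e^{sF}\ge s^\ell(F^+)^\ell/\ell!$ on $\{F\ge 0\}$ to force $\BE e^{sF}=\infty$.

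\emph{Localisation and moment dominance.} By $\sigma$-finiteness of $\lambda$ and $\|f_{n_0}\|_{n_0}>0$ one can pick $c_0>0$, $B\in\cX$ with $\lambda(B)<\infty$, and a symmetric Borel $E\subset B^{n_0}$ of positive $\lambda^{n_0}$-measure on which $f_{n_0}\ge c_0$; then $g:=c_0\I_E$ is bounded, symmetric, nonnegative, satisfies $g\le f_{n_0}$, and $\delta:=\int g\,\dint\lambda^{n_0}>0$. Because all $f_n\ge 0$, every integrand in the Corollary \ref{c2} expansion of $\BE F^\ell$ is pointwise nonnegative. Retaining only the outer index $(n_1,\ldots,n_\ell)=(n_0,\ldots,n_0)$ and then replacing $f_{n_0}$ by the pointwise smaller $g$ gives
\[
\BE F^\ell\;\ge\;\BE I_{n_0}(f_{n_0})^\ell\;\ge\;\BE I_{n_0}(g)^\ell,\qquad \ell\in\N,
\]
and for odd $\ell$ the identity $F^\ell=(F^+)^\ell-(F^-)^\ell$ yields $\BE(F^+)^\ell\ge \BE F^\ell\ge \BE I_{n_0}(g)^\ell$.

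\emph{Tail estimate for the truncated functional.} Since $g$ is supported in $B^{n_0}$, $I_{n_0}(g)$ depends only on $\eta|_B$, and on the event $A_k:=\{\eta(B)=k\}$ (of probability $e^{-\lambda(B)}\lambda(B)^k/k!$) the $k$ points of $\eta$ in $B$ are i.i.d.\ with normalised law $\lambda|_B/\lambda(B)$. In the expansion \eqref{prodIWI}, the principal term $\int g\,\dint\eta^{(n_0)}$ is a bounded nonnegative $U$-statistic of order $n_0$ with conditional mean $k^{(n_0)}\delta/\lambda(B)^{n_0}\sim k^{n_0}\delta/\lambda(B)^{n_0}$ and variance $O(k^{2n_0-1})$, while the other $n_0$ terms of \eqref{prodIWI} are bounded by $Ck^{n_0-1}$ since $g$ is bounded. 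Chebyshev's inequality then produces $c>0$ with $\BP(I_{n_0}(g)\ge ck^{n_0}\mid A_k)\ge 1/2$ for all sufficiently large $k$, and the crude pointwise bound $|I_{n_0}(g)^-|\le C\eta(B)^{n_0-1}$ together with Poisson moment estimates shows that $\BE(I_{n_0}(g)^-)^\ell$ is of strictly smaller order than the forthcoming lower bound on $\BE(I_{n_0}(g)^+)^\ell$, so that $\BE I_{n_0}(g)^\ell\ge \tfrac14(ck^{n_0})^\ell e^{-\lambda(B)}\lambda(B)^k/k!$ for every large $k$.

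\emph{Optimisation and conclusion.} Choosing $k=k(\ell)\sim n_0\ell/\log\ell$ (which maximises the right-hand side above) and invoking Stirling's formula for $k!$ and $\ell!$ yields
\[
\log\!\Big(\frac{s^\ell\,\BE(F^+)^\ell}{\ell!}\Big)\;\ge\;(n_0-1)\,\ell\,(\log\ell-1)+\ell\log s-O\!\Big(\frac{\ell\log\log\ell}{\log\ell}\Big).
\]
Since $n_0\ge 2$ the right-hand side tends to $+\infty$ as $\ell\to\infty$ through odd integers for every fixed $s>0$, and combined with $\BE e^{sF}\ge s^\ell\BE(F^+)^\ell/\ell!$ this forces $\BE e^{sF}=\infty$. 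The technical heart will be the second step: one has to separate the principal $U$-statistic from the lower-order corrections in \eqref{prodIWI}, control the negative part of $I_{n_0}(g)$, and then choose $k=k(\ell)$ so as to balance the polynomial blow-up $(ck^{n_0})^\ell$ against the Poisson factor $\lambda(B)^k/k!$---it is precisely the super-factorial decay $1/k!$ of the Poisson probability (rather than exponential) which, combined with $n_0\ge 2$, makes the MGF infinite.
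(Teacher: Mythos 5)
Your argument is correct in substance, but it follows a genuinely different route from the paper's. Both proofs start the same way: since all $f_n\ge 0$, every term in the expansion of Corollary \ref{c2} is nonnegative, so one may discard everything except the contribution of $(n_1,\dots,n_\ell)=(n_0,\dots,n_0)$. From there the paper stays entirely combinatorial: for $\ell$ divisible by $6$ it constructs an explicit subfamily $\Pi^{(n_0)}_\ell\subset\tilde\Pi_{\ge 2}(n_0,\dots,n_0)$ of cardinality at least $\ell!\,(\ell/6)!/(6!)^\ell$, bounds each associated integral below by $c^\ell$, and concludes that the \emph{even} moments $\BE F^\ell$ grow super-factorially. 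You instead truncate the kernel to a bounded $g=c_0\I_E\le f_{n_0}$ supported in $B^{n_0}$ with $\lambda(B)<\infty$, condition on $\eta(B)=k$ to exhibit $\int g\,\dint\eta^{(n_0)}$ as a binomial $U$-statistic, use Chebyshev to get $\BP(I_{n_0}(g)\ge ck^{n_0})\ge\tfrac12\BP(\eta(B)=k)$, and optimise over $k\sim n_0\ell/\log\ell$; this replaces the diagram count by a direct tail estimate and, because you work with \emph{odd} $\ell$ and the inequality $(F^+)^\ell\ge F^\ell$, it delivers $\BE\exp(sF)=\infty$ (rather than only $\BE\exp(s|F|)=\infty$) without any further argument about the sign of $F$ --- a point on which the paper is terser. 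The price is more delicate asymptotic bookkeeping, and two of your intermediate claims need small repairs: the bound $\BE I_{n_0}(g)^\ell\ge\tfrac14(ck^{n_0})^\ell e^{-\lambda(B)}\lambda(B)^k/k!$ holds for the optimising $k=k(\ell)$ and large $\ell$, not ``for every large $k$'' (for fixed $\ell$ and $k\to\infty$ the right-hand side eventually drops below the fixed quantity $\BE(I_{n_0}(g)^-)^\ell$); and the error term in your final display should be of order $\ell\log\log\ell$ (coming from $n_0\ell\log k$ with $k\sim n_0\ell/\log\ell$), not $\ell\log\log\ell/\log\ell$. Neither slip affects the conclusion, since $(n_0-1)\ell\log\ell$ still dominates for $n_0\ge 2$.
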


\begin{proof}
The idea of the proof is to show that the number of partitions we
sum over in the formulae \eqref{momchaos} and \eqref{cumchaos} is rapidly increasing in $\ell$. For $\ell\in\N$ with
$6|\ell$ (i.e.\ $\ell$ a multiple of $6$) let
$\Pi^{(2)}_\ell\subset\tilde{\Pi}_{\geq 2}(2,\hdots,2)$ be the set of partitions of $[2\ell]$
that can be constructed in the following way. First, the odd numbers in $[2\ell]$ are partitioned into
blocks of size $6$. Then we form $\ell/6-1$ blocks of size two from the even
numbers of $[2\ell]$ such that all partitions from $\Pi_{\geq 2}(2,\hdots,2)$ that contain the subpartition constructed so far must belong to $\tilde{\Pi}_{\geq 2}(2,\hdots,2)$.
Finally, we combine the remaining $\frac{2}{3}\ell+2$ even numbers of $[2\ell]$ into blocks of size two.
It follows from this construction and a short computation that
$$
|\Pi_{\ell}^{(2)}|\geq \frac{\ell!}{(\ell/6)! \, (6!)^{\frac{\ell}{6}}}
\frac{\left(\frac{2}{3}\ell+2\right)!}{(\frac{\ell}{3}+1)! \, 2^{\frac{\ell}{3}+1}}
\geq \frac{\ell! \, (\ell/6)!}{(6!)^\ell}.
$$
Note that we do not take into account here the different possibilities of forming the first $\ell/6-1$ blocks from the even numbers of $[2\ell]$. The previous inequality implies that
\begin{equation}\label{boundpartitions}
|\Pi_{\geq 2}(n_1,\hdots,n_\ell)|\geq |\tilde{\Pi}_{\geq 2}(n_1,\hdots,n_\ell)|
\geq |\Pi_{\ell}^{(2)}|\geq\frac{\ell! \, (\ell/6) !}{(6!)^\ell}
\end{equation}
if $n_1\geq 2,\hdots,n_\ell\geq 2$ and $\ell\in\N$ is such that $6|\ell$.

Each $\tilde{\sigma}\in\Pi^{(2)}_\ell$ with $\ell\in\N$ and $6|\ell$
induces for every $k\geq 2$ a partition $\sigma\in\tilde{\Pi}_{\geq 2}(k,\hdots,k)$
of $[k\ell]$ in the following way. First, one partitions $\{1+jk: j=0,\hdots,\ell-1\}$ as the odd
numbers in $\tilde{\sigma}$ and then for each $i=2,\hdots,k$ the sets $\{i+jk: j=0,\hdots,\ell-1\}$
as the even numbers in $\tilde{\sigma}$. We denote the set of these partitions
by $\Pi_{\ell}^{(k)}$.

Due to the assumptions that $\|f_{n_0}\|_{n_0}>0$ and that $f_{n_0}$ is non-negative and the structure of
$\Pi_\ell^{(n_0)}$ there must be a constant $c>0$ such that
\begin{equation}\label{boundintegrals}
\int (f_{n_0}^{\otimes \ell})_\sigma \, \dint\lambda^{|\sigma|} \geq c^{\ell}
\end{equation}
for all $\ell\in\N$ with $6|\ell$ and $\sigma\in\Pi^{(n_0)}_\ell$.
It follows from Corollary \ref{c2} and the estimates \eqref{boundpartitions}
and \eqref{boundintegrals} that
$$
\BE F^{\ell} \geq \frac{\ell!  \, (\ell/6)! }{(6!)^\ell}c^{\ell} \quad \text{ and }
\quad \gamma_{\ell}(F) \geq \frac{\ell! \, (\ell/6)! }{(6!)^\ell}c^{\ell}$$
for all $\ell\in\N$ with $6|\ell$. This implies that
$\BE\exp(s F)=\infty$ for all $s> 0$.
\end{proof}

\section{Asymptotic behaviour of moments and cumulants}\label{seclimits}

In this section we consider Poisson processes $\eta_t$ with intensity measures $\lambda_t:=t\lambda$, $t>0$. We are interested in functionals of $\eta_t$ that can be represented as $F_t=g(t)\sum f(x_1,\hdots,x_m)$ with the sum running over all $m$-tuples of distinct points of $\eta_t$ for some integer $m\geq 1$. This setting is taken from \cite[Section 5]{ReitznerSchulte2011}, where a central limit theorem for $F_t$ as $t\to\infty$ is derived. We generalize this to a multivariate setting and investigate the asymptotic behaviour of such functionals as $t\to\infty$. More formally, fix $\ell\geq 1$ and for $i=1,\dots,\ell$ let $m_i\in\N$, $f^{(i)}\in L^1_s(\lambda^{m_i})$ and $g_i:(0,\infty)\rightarrow\R$ such that $g_i(t)\ne 0$ for all (or at least for all sufficiently large) $t>0$. Now define
\begin{align}\label{Ustatistic}
F_t^{(i)}:=g_i(t)\int f^{(i)}(x_1,\dots,x_{m_i})\,\eta_t^{(m_{i})}\big(\dint(x_1,\dots,x_{m_i})\big), \quad t>0.
\end{align}
By \eqref{Meckem}, we have $$\BE F_t^{(i)}=g_i(t)t^{m_i} \int f^{(i)}(x_1,\hdots,x_{m_i})\,\lambda^{m_i}\big(\dint(x_1,\hdots,x_{m_i})\big).$$ For $n=1,\dots,m_i$, define
\begin{align}\label{fn}
f^{(i)}_n(x_1,\hdots,x_n):=\binom{m_i}{n}\int f^{(i)}(x_1,\hdots,x_n,y_1,\dots,y_{m_i-n})\,
\lambda^{m_i-n}\big(\dint(y_1,\hdots,y_{m_i-n})\big)
\end{align}
and denote by $I_{n,t}$ the $n$-fold Wiener-It\^o integral with respect to $\eta_t$. We claim that, $\BP$-almost surely, $F_t^{(i)}$ can be written as
\begin{align}\label{ch2}
F_t^{(i)}=\BE F_t^{(i)}+g_i(t) \sum^{m_i}_{n=1} t^{m_i-n}I_{n,t}(f^{(i)}_n).
\end{align}
Indeed, if $f^{(i)}_n\in L^2(\lambda^n)$ for all $n\leq m_i$ then $F_t^{(i)}$ is square-integrable and \eqref{ch2} is just a special case of the chaos expansion \eqref{chaos2}, cf.\ Lemma 3.5 in \cite{ReitznerSchulte2011}. The $L^1$-version can be derived by approximation or by a direct calculation (just plug \eqref{fn} into \eqref{prodIWI} and observe that all resulting terms cancel out,
except the integral representation \eqref{Ustatistic} of $F_t^{(i)}$).

Write $\|\,\cdot\,\|_n$ for the norm and $\langle\,\cdot\,,\,\cdot\,\rangle_n$ for the inner product in $L^2(\lambda^n)$, and assume again that $f^{(i)}_n\in L^2(\lambda^n)$ for $n\leq m_i$. Equations \eqref{orth} and \eqref{ch2} imply that
\begin{align}\label{var2}
&\BV[F^{(i)}_t]=g_i(t)^2\sum^{m_i}_{n=1}  t^{2m_i-2n}\,n!\,\int (f_n^{(i)})^2\, \dint\lambda_t^n
=g_i(t)^2\sum^{m_i}_{n=1}  t^{2m_i-n}\,n!\,\|f_n^{(i)}\|^2_n
\end{align}
and that
\begin{align}\label{cvij}
& \CV[F^{(i)}_t,F^{(j)}_t]=g_i(t)g_j(t)\sum^{\min\{m_i, m_j\}}_{n=1}
t^{m_i+m_j-n}\,n!\,\langle f^{(i)}_n,f^{(j)}_n\rangle_n.
\end{align}
The variances, covariances and mixed moments and cumulants of higher order show the following asymptotic behaviour as $t\to\infty$:

\begin{theorem}\label{thlimmom}
Assume that $\int (\otimes_{i=1}^\ell |f^{(i)}|)_{\sigma}\,\dint\lambda^{|\sigma|}<\infty$ for all $\sigma\in \Pi(m_1,\hdots,m_\ell)$. Then
\begin{align}\label{lmom}
\lim_{t\to\infty} \BE\prod_{i=1}^\ell \frac{(F_t^{(i)}-\BE F_t^{(i)})}{g_i(t)t^{m_i-1/2}}=
\sum_{\sigma\in\Pi_{=2}(1,\hdots,1)}\int (\otimes_{i=1}^\ell f_1^{(i)})_\sigma\,\dint\lambda^{|\sigma|}
\end{align}
and
\begin{align}\label{lcum}
  \lim_{t\to\infty}\frac{\gamma\big(F^{(1)}_t-\BE F^{(1)}_t,\hdots,F^{(\ell)}_t-\BE F^{(\ell)}_t\big)}
{\prod_{i=1}^\ell g_i(t)t^{m_i-1/2}}=0,\quad \ell\ge 3.
\end{align}
\end{theorem}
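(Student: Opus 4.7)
The plan is to insert the chaos expansion \eqref{ch2} for each centred component $F_t^{(i)} - \BE F_t^{(i)}$ into the mixed moment on the left of \eqref{lmom} (respectively into the mixed cumulant on the left of \eqref{lcum}), expand the resulting product by multilinearity, and then apply Theorem \ref{thmoments} to each factor $\BE\prod_i I_{n_i,t}(f^{(i)}_{n_i})$ (respectively $\gamma(I_{n_1,t}(f^{(1)}_{n_1}),\ldots,I_{n_\ell,t}(f^{(\ell)}_{n_\ell}))$) after replacing $\lambda$ by $\lambda_t = t\lambda$. The formulae \eqref{moments}/\eqref{cumulants} then come with an extra factor $t^{|\sigma|}$ coming from the Poisson intensity, and the whole argument reduces to a careful count of the power of $t$ attached to each term.

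Before that count can be made, one needs to verify the integrability hypothesis \eqref{ass} of Theorem \ref{thmoments} for every tuple $(f^{(1)}_{n_1},\ldots,f^{(\ell)}_{n_\ell})$ and every $\sigma\in\Pi(n_1,\ldots,n_\ell)$. This is a Fubini argument: each such $\sigma$ can be extended to a $\tilde\sigma\in\Pi(m_1,\ldots,m_\ell)$ by attaching singleton blocks for the $m_i-n_i$ coordinates being marginalised out in \eqref{fn}, and the hypothesis of the theorem applied to $\tilde\sigma$ directly dominates $\int(\otimes_i|f^{(i)}_{n_i}|)_\sigma\,\dint\lambda^{|\sigma|}$. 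I expect this essentially bookkeeping step to be the main technical obstacle, since it must be carried out uniformly in $(n_1,\ldots,n_\ell)$ and in $\sigma$ before any asymptotic analysis can start.

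With the formulae in hand, dividing by $\prod_i g_i(t)t^{m_i-1/2}$ gives each summand the factor $t^{\ell/2-\sum_i n_i+|\sigma|}$. Since every block of $\sigma\in\Pi_{\ge 2}(n_1,\ldots,n_\ell)$ has size at least two, $|\sigma|\le(\sum_i n_i)/2$, and combined with $n_i\ge 1$ this forces $\ell/2-\sum_i n_i+|\sigma|\le 0$; equality holds precisely when $n_1=\cdots=n_\ell=1$ and every block of $\sigma$ has size exactly two, i.e.\ $\sigma\in\Pi_{=2}(1,\ldots,1)$. All other summands vanish in the limit, and summing the surviving integrals reproduces the right-hand side of \eqref{lmom}. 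For the cumulant statement \eqref{lcum} the same exponent bookkeeping applies but with $\sigma$ further restricted to $\tilde\Pi_{\ge 2}(n_1,\ldots,n_\ell)$, i.e.\ $|\sigma^*|=1$. The candidate leading terms again require $n_i\equiv 1$ and $\sigma\in\Pi_{=2}(1,\ldots,1)$, but a perfect matching on $[\ell]$ merges all $\ell$ rows into a single block of $\sigma^*$ only if $\ell=2$; for $\ell\ge 3$ no such $\sigma$ exists, every exponent is strictly negative, and the normalised cumulant tends to zero as claimed.
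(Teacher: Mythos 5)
Your proposal is correct and follows essentially the same route as the paper: insert the chaos expansion \eqref{ch2}, apply Theorem \ref{thmoments} with $\lambda_t=t\lambda$, and count powers of $t$, with the leading order attained exactly for $n_1=\cdots=n_\ell=1$ and perfect matchings, which for $\ell\ge 3$ cannot lie in $\tilde\Pi_{\ge 2}$. Your Fubini argument extending $\sigma$ by singleton blocks to some $\tilde\sigma\in\Pi(m_1,\dots,m_\ell)$ is a correct elaboration of the integrability step that the paper only asserts.
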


\begin{remark}\rm
Note that the right-hand side of \eqref{lmom} vanishes for odd $\ell$. Moreover, $\gamma\big(F_t^{(1)}-\BE F_t^{(1)}\big)=0$ and for $\ell=2$, the left-hand side of \eqref{lcum} coincides with that of \eqref{lmom} and equals $\langle f_1^{(1)},f_1^{(2)} \rangle_1$.
\end{remark}

\begin{proof}[Proof of Theorem \ref{thlimmom}.]
We can assume without loss of generality that $g_i(t)\equiv 1$. Due to the special structure of $F_t^{(i)}$ and the kernels of its chaos expansion, the integrability assumptions on $\otimes_{i=1}^\ell f^{(i)}$ imply that $\int (\otimes_{i=1}^\ell |f_{n_i}^{(i)}|)_\sigma\,\dint\lambda^{|\sigma|}<\infty$ for all $\sigma\in\Pi(n_1,\hdots,n_\ell)$ and $1\leq n_i\leq m_i$, $i=1,\hdots,\ell$. The latter is condition \eqref{ass22} for the functions $f_{n_i}^{(i)}$ in \eqref{ch2} and allows us to apply Theorem \ref{thmoments}, which yields
\begin{equation}\label{mixedmoments}
\begin{split}
\BE \prod_{i=1}^\ell(F^{(i)}_t-\BE F^{(i)}_t)&= \sum_{1\leq n_1\leq m_1,\hdots,1\leq n_\ell\leq m_\ell}
\BE\prod_{i=1}^\ell t^{m_i-n_i}I_{n_i,t}(f_{n_i}^{(i)})\\
&=\sum_{1\leq n_1\leq m_1,\hdots,1\leq n_\ell\leq m_\ell}
\;\sum_{\sigma\in\Pi_{\geq 2}(n_1,\hdots,n_\ell)}
\int (\otimes_{i=1}^{\ell} t^{m_i-n_i}f_{n_i}^{(i)})_{\sigma}\,\dint\lambda_t^{|\sigma|}.
\end{split}
\end{equation}
On the right-hand side, each summand has order $t^{\sum (m_i-n_i)+|\sigma|}$. Because of $|\sigma|\leq \lfloor (\sum n_i)/2 \rfloor$ and $\sum n_i\geq \ell$ we have $\sum (m_i-n_i)+|\sigma|\le \sum m_i-\lceil (\sum n_i)/2\rceil\leq \sum m_i-\lceil \ell/2\rceil$ so that the maximal order is at most $t^{\sum m_i-\lceil \ell/2\rceil}$. For even $\ell$ this is obtained if and only if $n_1=\hdots=n_\ell=1$, and the partition $\sigma$ satisfies $|J|=2$ for all $J\in\sigma$. Exactly these summands remain as $t\rightarrow\infty$ since they have the same order as the denominator in \eqref{lmom}; other summands vanish as $t\to\infty$. If $\ell$ is odd, the numerator has at most order $t^{\sum m_i-(\ell+1)/2}$ (in fact the order is attained) and the denominator has order $t^{\sum m_i-\ell/2}$ so that the expression vanishes in the limit.

For the cumulant $\gamma\big(F^{(1)}_t-\BE F^{(1)}_t,\hdots,F^{(\ell)}_t-\BE F^{(\ell)}_t\big)$, $\ell\geq 3$, we obtain by Theorem \ref{thmoments} the expression in the second line of \eqref{mixedmoments} where this time the inner sum only runs over all partitions $\sigma\in \tilde{\Pi}_{\geq 2}(n_1,\hdots,n_\ell)$. Since $\tilde{\Pi}_{\geq 2}(1,\hdots,1)\cap\Pi_{=2}(1,\hdots,1)=\emptyset$ for $\ell\geq 3$, all summands have a lower order than the denominator in \eqref{lcum} and vanish as $t\rightarrow\infty$.
\end{proof}

\bigskip

In the next result we take $F_t^{(1)}=\dots=F^{(\ell)}_t=F_t$ with
\begin{equation}\label{eq:Ftunivariate}
F_t:=g(t)\int f(x_1,\dots,x_{m})\,\eta_t^{(m)}\big(\dint(x_1,\dots,x_{m})\big), \qquad t>0,
\end{equation}
as in \eqref{Ustatistic}, where $g_1=\dots=g_{\ell}=g$, $f^{(1)}=\dots=f^{(\ell)}=f$ and $m_1=\dots=m_\ell=m$. Since $\Pi_{=2}(1,\hdots,1)$ has cardinality $$(\ell-1)!!:=(\ell-1)(\ell-3)\cdot\ldots \cdot 3\cdot 1$$ for even $\ell\ge 2$, Theorem \ref{thlimmom} implies the following result.

\begin{corollary}\label{c3}
Assume that $\int (|f|^{\otimes \ell})_{\sigma}\,\dint\lambda^{|\sigma|}<\infty$ for all $\sigma\in \Pi(m,\hdots,m)$ and that $\|f_1\|_1>0$. Then
\begin{align*}
\lim_{t\to\infty}\frac{\BE (F_t-\BE F_t)^\ell}{(\BV[F_t])^{\frac{\ell}{2}}}=
\begin{cases}
(\ell-1)!!, &\text{if $\ell$ is even},\\
0, &\text{if $\ell$ is odd},
\end{cases}
\end{align*}
and
\begin{align*}
  \lim_{t\to\infty}\gamma_\ell\left(\frac{F_t-\BE F_t}{\sqrt{\BV[F_t]}}\right)=
\begin{cases}
1, &\text{if $\ell=2$,}\\
0, &\text{if $\ell\ne 2$}.
\end{cases}
\end{align*}
\end{corollary}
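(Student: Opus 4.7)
The plan is to specialize Theorem~\ref{thlimmom} to the univariate setting with $F_t^{(1)} = \cdots = F_t^{(\ell)} = F_t$, $f^{(i)} = f$, $m_i = m$, $g_i = g$, and to match the limit \eqref{lmom} against the normalization $(\BV[F_t])^{\ell/2}$. The integrability assumption of the corollary is exactly the hypothesis of Theorem~\ref{thlimmom} in this univariate reduction, since $(\otimes_{i=1}^\ell |f^{(i)}|)_\sigma = (|f|^{\otimes \ell})_\sigma$, so all the results of Section~\ref{seclimits} are immediately available.

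First, I would pin down the asymptotic order of the denominator. From the exact formula \eqref{var2} the dominant term corresponds to $n=1$, namely $g(t)^2 t^{2m-1}\|f_1\|_1^2$, while every other summand is of order $g(t)^2 t^{2m-n}$ with $n \geq 2$. Since $\|f_1\|_1 > 0$ by hypothesis, this yields $\BV[F_t] \sim g(t)^2 t^{2m-1}\|f_1\|_1^2$, and hence $(\BV[F_t])^{\ell/2} \sim g(t)^\ell t^{\ell(m-1/2)}\|f_1\|_1^\ell$ as $t\to\infty$. This matches the denominator $g(t)^\ell t^{\ell(m-1/2)}$ appearing on the left-hand side of \eqref{lmom} up to the extra factor $\|f_1\|_1^\ell$.

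Next, I would evaluate the right-hand side of \eqref{lmom}. Each $\sigma \in \Pi_{=2}(1,\ldots,1)$ is a perfect matching of $[\ell]$: the set is empty, so the sum equals $0$, when $\ell$ is odd, and has cardinality $(\ell-1)!!$ when $\ell$ is even. For any such matching, $(f_1^{\otimes \ell})_\sigma$ identifies variables pairwise, so each block contributes a factor $f_1(y)^2$ and the resulting integral equals $\bigl(\int f_1^2 \,\dint\lambda\bigr)^{\ell/2} = \|f_1\|_1^\ell$. Summing over matchings gives $(\ell-1)!!\,\|f_1\|_1^\ell$ for even $\ell$, and dividing by the $\|f_1\|_1^\ell$ factor absorbed into $(\BV[F_t])^{\ell/2}$ produces the first claim.

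For the cumulant statement, the case $\ell=2$ is immediate from $\gamma_2(F_t - \BE F_t) = \BV[F_t]$. For $\ell\geq 3$ I would use the scaling identity $\gamma_\ell(cX) = c^\ell \gamma_\ell(X)$ to rewrite the standardized cumulant as $\gamma_\ell(F_t - \BE F_t)/(\BV[F_t])^{\ell/2}$, and then apply \eqref{lcum} in the univariate specialization, which forces the numerator to be $o\bigl(g(t)^\ell t^{\ell(m-1/2)}\bigr)$; combined with the variance asymptotics above, this yields the limit $0$. No serious obstacle is expected; the only subtlety is keeping the powers of $t$ and the factors of $\|f_1\|_1$ straight when converting between the normalization $g(t)^\ell t^{\ell(m-1/2)}$ used in Theorem~\ref{thlimmom} and the intrinsic normalization $(\BV[F_t])^{\ell/2}$.
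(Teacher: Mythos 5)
Your proposal is correct and follows essentially the same route as the paper: the paper derives Corollary \ref{c3} directly from Theorem \ref{thlimmom} by noting that $\Pi_{=2}(1,\dots,1)$ has cardinality $(\ell-1)!!$ for even $\ell$ (and is empty for odd $\ell$), that each matching contributes $\|f_1\|_1^\ell$, and that \eqref{var2} with $\|f_1\|_1>0$ gives $\BV[F_t]\sim g(t)^2t^{2m-1}\|f_1\|_1^2$. Your accounting of the normalization and the cumulant scaling matches the intended argument; no gaps.
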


\section{Central limit theorems}\label{seccentral}

In what follows, we assume the same setting as in the previous section. More precisely, fix $\ell\in{\Bbb N}$, let $F_t^{(1)},\hdots,F_t^{(\ell)}$ be defined as in \eqref{Ustatistic} and assume for each $i\le \ell$ that $f^{(i)}_n\in L^2(\lambda^n)$ for $n\leq m_i$. We shall at first show how the results of the previous section lead to a multivariate central limit theorem via the method of moments. Let us define
\begin{align}\label{def:fhat}
\hat{F}^{(i)}_t:=g_i(t)^{-1}t^{-(m_i-1/2)}(F^{(i)}_t-\BE F^{(i)}_t)
\end{align}
and note from \eqref{ch2} that
\begin{align}\label{4.33}
\hat{F}^{(i)}_t=t^{1/2}\sum^{m_i}_{n=1}t^{-n} I_{n,t}(f^{(i)}_n).
\end{align}
Furthermore, by \eqref{cvij}, we have the asymptotic covariances
\begin{align*}
  C_{ij}:=\lim\limits_{t\to\infty}\CV[\hat{F}^{(i)}_t,\hat{F}^{(j)}_t]=\langle f^{(i)}_1,f^{(j)}_1\rangle_1=\int f^{(i)}_1(x)f^{(j)}_1(x)\,\lambda(\dint x), \quad i,j\in\{1,\ldots,\ell\}.
\end{align*}

\begin{proposition}
Let $N$ be an $\ell$-dimensional centred Gaussian random vector with covariance matrix $(C_{ij})_{i,j=1,\dots,\ell}$ and assume that $\int(\otimes_{j=1}^k |f^{(i_j)}|)_\sigma \,\dint\lambda^{|\sigma|}<\infty$ for all $k\in\N$, $i_1,\hdots,i_k\in\{1,\hdots,\ell\}$ and $\sigma\in\Pi(m_{i_1},\ldots,m_{i_k})$. Then $\big(\hat F^{(1)}_t,\dots,\hat F^{(\ell)}_t\big)$ converges in distribution to $N$.
\end{proposition}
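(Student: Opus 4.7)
The strategy is the method of moments, as the authors announce. Since $N$ is multivariate Gaussian, its law is uniquely determined by its joint moments, so it is enough to show that every mixed moment of $\big(\hat F^{(1)}_t,\dots,\hat F^{(\ell)}_t\big)$ converges to the corresponding moment of $N$. Convergence in distribution can then be deduced via the Cram\'er--Wold device, using that each one-dimensional Gaussian satisfies Carleman's condition.

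The first step is to compute the limiting joint moments. Fix $k\in\N$ and indices $i_1,\dots,i_k\in\{1,\dots,\ell\}$ (with repetitions allowed). I apply Theorem \ref{thlimmom} to the family $f^{(i_1)},\dots,f^{(i_k)}$ with scalings $g_{i_1},\dots,g_{i_k}$; the integrability hypothesis of the present proposition is precisely what is needed to invoke that theorem for this sub-family, and it yields
\[
\lim_{t\to\infty}\BE\prod_{j=1}^k \hat F^{(i_j)}_t
= \sum_{\sigma\in \Pi_{=2}(1,\dots,1)}\int \bigl(\otimes_{j=1}^k f^{(i_j)}_1\bigr)_\sigma \,\dint\lambda^{|\sigma|},
\]
where the argument list $(1,\dots,1)$ has length $k$. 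For odd $k$ the set $[k]$ admits no perfect matching, so the limit is $0$. For even $k$, any $\sigma\in\Pi_{=2}(1,\dots,1)$ is a partition of $[k]$ into pairs $\{a_1,b_1\},\dots,\{a_{k/2},b_{k/2}\}$, and identifying arguments within each pair factorises the integral as
\[
\int \bigl(\otimes_{j=1}^k f^{(i_j)}_1\bigr)_\sigma \,\dint\lambda^{|\sigma|}
= \prod_{r=1}^{k/2} \langle f^{(i_{a_r})}_1, f^{(i_{b_r})}_1\rangle_1
= \prod_{r=1}^{k/2} C_{i_{a_r},i_{b_r}}.
\]
Summing over pair partitions of $[k]$ I recognise Isserlis' (Wick's) formula for the joint moment $\BE\prod_{j=1}^k N_{i_j}$ of a centred Gaussian vector with covariance $(C_{ij})$, so the desired moment convergence holds.

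To conclude, I would invoke the Cram\'er--Wold device. For any $a=(a_1,\dots,a_\ell)\in\R^\ell$, the random variable $Y_t:=\sum_{i=1}^\ell a_i \hat F^{(i)}_t$ has all moments, each $\BE Y_t^k$ is a finite linear combination of the mixed moments treated above, and therefore $\BE Y_t^k$ converges to the $k$-th moment of the one-dimensional centred Gaussian with variance $a^\top (C_{ij})\, a$. Since univariate Gaussians are moment-determined, $Y_t$ converges weakly to this Gaussian, and Cram\'er--Wold then delivers joint convergence of $\big(\hat F^{(1)}_t,\dots,\hat F^{(\ell)}_t\big)$ to $N$.

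The main obstacle has effectively been removed by Theorem \ref{thlimmom}: the combinatorial fact that only pair partitions of singletons survive in the limit is what forces the Gaussian form of the limiting moments. What remains is essentially bookkeeping, together with the observation that the integrability assumption of the proposition is uniform in $k$ and $i_1,\dots,i_k$, so Theorem \ref{thlimmom} can be fed at every moment order without any additional work.
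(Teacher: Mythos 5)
Your proof is correct and takes essentially the same route as the paper: both rest on Theorem \ref{thlimmom} and conclude by the method of moments/cumulants. The only difference is that you use the moment half \eqref{lmom} together with Isserlis' formula and the Cram\'er--Wold device, whereas the paper uses the cumulant half \eqref{lcum}, noting that all joint cumulants of order at least three vanish in the limit, which makes the final bookkeeping marginally shorter since no Wick-formula identification is needed.
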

\begin{proof} Observe first that $\gamma(\hat F^{(i)}_t)={\Bbb E}\hat F_t^{(i)}=0$
for $1\leq i\leq\ell$ and
$$
\gamma(\hat F^{(i)}_t,\hat F^{(j)}_t)=\CV[\hat{F}^{(i)}_t,\hat{F}^{(j)}_t]
\to C_{ij}\qquad{\rm as}\qquad t\to\infty
$$
for any $1\leq i,j\leq\ell$.
Now fix integers $k\geq 3$ and $1\leq i_1\leq\ldots\leq i_k\leq\ell$,
and consider the joint cumulant $\gamma(\hat F^{(i_1)}_t,\dots,\hat F^{(i_k)}_t)$.
By homogeneity and \eqref{def:fhat} it follows that
$$
\gamma(\hat F^{(i_1)}_t,\dots,\hat F^{(i_k)}_t)
={\gamma(F_t^{(i_1)}-{\Bbb E}F_t^{(i_1)},\ldots,F_t^{(i_k)}
-{\Bbb E}F_t^{(i_k)})\over\prod_{j=1}^k g_{i_j}(t)t^{m_{i_j}-1/2}},
$$
whence Theorem \ref{thlimmom} implies that
$\gamma(\hat F^{(i_1)}_t,\dots,\hat F^{(i_k)}_t)\to 0$
as $t\to\infty$. The method of moments (or cumulants) now yields the
multivariate limit theorem, cf.\ \cite[p.\ 352]{Bil}. In the univariate case
the conclusion can also be directly drawn from Corollary \ref{c3}.
\end{proof}

\bigskip

We now turn to a quantitative version of the multivariate central limit theorem.
We measure the distance between two $\ell$-dimensional random vectors $X$ and $Y$ by
\begin{align}\label{d3}
d_3(X,Y):=\sup\limits_{g\in{\cal H}}|\BE g(X)-\BE g(Y)|,
\end{align}
where ${\cal H}$ is the set of all functions $h\in C^{3}({\R}^\ell)$ that satisfy
$$
\max\limits_{1\leq i_1\leq i_2\leq \ell}\,\sup_{x\in{\mathbb{R}}^\ell}
\left|\frac{\partial^2h(x)}{\partial x_{i_1}\partial x_{i_2}}\right|\le 1,
\quad
\max\limits_{1\leq i_1\leq i_2\leq i_3\leq \ell}\,
\sup_{x\in{\mathbb{R}}^\ell}\left|\frac{\partial^3h(x)}
{\partial x_{i_1}\partial x_{i_2}\partial x_{i_3}}\right|\leq 1.
$$
Note that convergence under the (pseudo-) metric $d_3$ implies convergence in distribution. In \cite{PeZheng10}, bounds are derived for the $d_3$-distance to the multivariate normal, along with similar bounds using a similarly defined $d_2$-distance. We work with the result for the $d_3$-distance since the covariance matrix of the Gaussian random vector is allowed to be only positive \textit{semi}-definite (this means that some linear combinations of the components of the limiting random vector may be constant). A non-trivial example for such a degenerate situation can be found in \cite{Hein09}. The multivariate normal approximation of Poisson U-statistics in the $d_2$-distance has been considered in \cite{Minh}.

In contrast to the univariate results for the Wasserstein distance discussed in the introduction, we can derive a multivariate result only for the $d_3$-metric since the underlying result in \cite{PeZheng10} is based on that distance.
This is caused by the fact that the approaches used in \cite{PeZheng10} for the multivariate normal approximation, namely an interpolation technique and the multivariate Stein's method, require a higher degree of smoothness for the test functions.

We are now ready to state the Berry-Esseen-type inequality.




\begin{theorem}\label{thmult} Let $N$ be an $\ell$-dimensional centered Gaussian random vector with covariance matrix $(C_{ij})_{i,j=1,\dots,\ell}$. Assume that $\int|f^{(i)}_1|^3\,\dint\lambda <\infty$ for every $i\in\{1,\dots,\ell\}$.
Then there is a constant $\tilde c>0$ such that
\begin{align*}
d_3\big((\hat F^{(1)}_t,\dots,\hat F^{(\ell)}_t),N\big)\le \tilde c t^{-1/2},\quad t\ge 1.
\end{align*}
\end{theorem}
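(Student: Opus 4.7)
The plan is to apply the abstract multivariate Berry-Esseen bound for Poisson functionals with finite chaos expansion established in \cite{PeZheng10} (via Malliavin calculus combined with a multivariate version of Stein's method) to the explicit chaos representation \eqref{4.33}. As in the proof of Theorem \ref{thlimmom}, one may assume without loss of generality that $g_i\equiv 1$ and write
\[
\hat F^{(i)}_t = \sum_{n=1}^{m_i} I_{n,t}\!\bigl(h^{(i)}_{n,t}\bigr), \qquad h^{(i)}_{n,t} := t^{1/2-n}\,f^{(i)}_n.
\]
The Peccati-Zheng bound controls $d_3\bigl((\hat F^{(1)}_t,\dots,\hat F^{(\ell)}_t), N\bigr)$ by the sum of a covariance-discrepancy term $B_1 := \sum_{i,j=1}^\ell |C_{ij}-\CV(\hat F^{(i)}_t,\hat F^{(j)}_t)|$ and a term $B_2$ involving $L^2(\lambda_t^{\bullet})$-norms of the star contractions $h^{(i)}_{n,t}\star^s_r h^{(j)}_{m,t}$, summed over admissible index configurations other than the one producing the inner product.

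For $B_1$, formula \eqref{cvij} gives directly
\[
\CV(\hat F^{(i)}_t,\hat F^{(j)}_t) - C_{ij} = \sum_{n=2}^{\min(m_i,m_j)} t^{1-n}\,n!\,\langle f^{(i)}_n, f^{(j)}_n\rangle_n = O(t^{-1}),
\]
which is of smaller order than $t^{-1/2}$ and therefore comfortably within budget. For $B_2$, a direct computation using $\lambda_t = t\lambda$ and the definition of the $\star^s_r$ contraction yields the scaling identity
\[
\|h^{(i)}_{n,t}\star^s_r h^{(j)}_{m,t}\|^2_{L^2(\lambda_t^{n+m-r-s})} = t^{2-n-m+s-r}\,\|f^{(i)}_n\star^s_r f^{(j)}_m\|^2_{L^2(\lambda^{n+m-r-s})}.
\]
Once the index configuration $(n,m,r,s)=(n,n,n,n)$ (the inner product, already absorbed in $B_1$) is excluded, an exponent check shows that $2-n-m+s-r\leq -1$ in every remaining admissible case; hence each such norm has magnitude $O(t^{-1/2})$, provided its unscaled counterpart $\|f^{(i)}_n\star^s_r f^{(j)}_m\|_{L^2(\lambda^{n+m-r-s})}$ is finite.

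The main obstacle is ensuring finiteness of the unscaled contraction norms under only the explicit hypothesis $\int|f^{(i)}_1|^3\,d\lambda<\infty$. The critical case is $n=m=1$, $r=1$, $s=0$, where the squared norm equals $\int (f^{(i)}_1)^2(f^{(j)}_1)^2\,d\lambda$; this is exactly the quantity controlled by the third-moment form of the Malliavin-Stein bound on the Poisson side of \cite{PeZheng10}, which substitutes an $L^3$-type quantity $\int|f^{(i)}_1|^3\,d\lambda$ for what would otherwise be an $L^4$-contraction, and this is precisely where the $L^3$-integrability hypothesis intervenes. For $n+m\geq 3$ the enlarged negative power of $t$ leaves enough slack to dominate the contractions by $L^2$-type combinations of the $f^{(i)}_n$ via Cauchy-Schwarz; these are finite because $f^{(i)}_n\in L^2(\lambda^n)$ (a standing hypothesis preceding \eqref{def:fhat}, itself a consequence of $f^{(i)}\in L^1_s(\lambda^{m_i})\cap L^2(\lambda^{m_i})$ via \eqref{fn} and Jensen's inequality). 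Combining the estimates for $B_1$ and $B_2$ then yields $d_3 \leq \tilde c\,t^{-1/2}$ for $t\geq 1$, with $\tilde c$ depending on $\ell$, the $m_i$, the $L^2$-norms of the $f^{(i)}_n$ and $\int|f^{(i)}_1|^3\,d\lambda$.
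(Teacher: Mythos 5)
Your proposal follows the contraction route (apply the general multivariate Malliavin--Stein bound of \cite{PeZheng10} to the full chaos decomposition \eqref{4.33} and estimate all star-contraction norms), which is precisely the route the paper deliberately avoids. The problem is not the powers of $t$ --- your scaling exponents are correct and the covariance-discrepancy estimate $O(t^{-1})$ via \eqref{cvij} is fine --- but the \emph{finiteness} of the unscaled contraction norms, and here your argument has a genuine gap. For a partial contraction ($s<r$, variables identified but not integrated out) the quantity $\|f^{(i)}_n\star^s_r f^{(j)}_m\|_{L^2}$ is an $L^4$-type functional of the kernels: already for $n=m=1$, $r=1$, $s=0$ it equals $(\int (f^{(i)}_1)^2(f^{(j)}_1)^2\,\dint\lambda)^{1/2}$, and for $n=m=2$, $r=1$, $s=0$ it equals $(\int \|f^{(i)}_2(z,\cdot)\|_1^2\,\|f^{(j)}_2(z,\cdot)\|_1^2\,\lambda(\dint z))^{1/2}$ (with $\|\cdot\|_1$ the $L^2(\lambda)$-norm in the second argument). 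Neither is controlled by $f^{(i)}_n\in L^2(\lambda^n)$ together with $\int|f^{(i)}_1|^3\,\dint\lambda<\infty$; in particular $f_1\in L^2\cap L^3$ does not give $f_1\in L^4$. Your appeal to ``enough slack in the power of $t$'' conflates the rate with integrability, and Cauchy--Schwarz reduces a contraction to a product of $L^2$-norms only in the fully integrated case $s=r$. So as written, your argument needs hypotheses strictly stronger than those of the theorem (or a careful verification that the specific version of the Peccati--Zheng bound you invoke contains no partial contractions of the higher-order kernels, which the general Theorem 4.2 of \cite{PeZheng10} does).

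The paper's proof sidesteps all of this by a truncation argument: it sets $\bar F^{(i)}_t:=t^{-1/2}I_{1,t}(f^{(i)}_1)$, applies \cite[Corollary 4.3]{PeZheng10} only to this vector of \emph{first-order} integrals --- for which the bound is genuinely third-moment based and needs exactly $\int|f^{(i)}_1|^3\,\dint\lambda<\infty$, giving the rate $t^{-1/2}$ in \eqref{4.21} --- and then controls $d_3\big((\hat F^{(i)}_t)_i,(\bar F^{(i)}_t)_i\big)$ by the elementary smoothing Lemma \ref{lem:Lemmad3}, which only requires the $L^2(\BP)$-distance $\sum_i\BE(\hat F^{(i)}_t-\bar F^{(i)}_t)^2=\sum_i\sum_{n\ge 2}t^{1-n}\|f^{(i)}_n\|_n^2=O(t^{-1})$, i.e.\ only the standing assumption $f^{(i)}_n\in L^2(\lambda^n)$. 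If you want to salvage your approach, you would either have to add $L^4$-type (contraction) integrability assumptions on the kernels $f^{(i)}_n$, or adopt this truncation step, at which point you have reproduced the paper's proof.
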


\begin{remark}\label{rem:univariate}\rm
For $\ell=1$ it is possible to replace $d_3$ in Theorem \ref{thmult} by the classical Wasserstein distance $d_W$ and obtain that
$d_W\big(\hat F_t,N\big)\le c t^{-1/2}$, where $N$ is a standard Gaussian random variable with variance $\|f_1\|_1^2$ and $c$ is a constant (see \cite[Theorem 7.3]{LRPeccati2} or \cite[Theorem 5.2]{ReitznerSchulte2011} for a different rescaling). If $\|f_1\|_1=0$, this implies convergence in distribution to the constant random variable $N\equiv 0$. In this situation Theorem 7.3 in \cite{LRPeccati2} yields convergence in distribution to a higher-order Wiener-It\^o integral with respect to a Gaussian random measure after a suitable (different) scaling.
\end{remark}


\begin{remark}\rm Theorem \ref{thmult} also holds if $F^{(1)}_t,\dots,F_t^{(\ell)}$ are finite sums of random variables of type \eqref{Ustatistic}. In fact, under some additional conditions, any Poisson functional with finite Wiener-It\^o chaos expansion can be represented in such a way, cf.\ \cite{ReitznerSchulte2011}.
\end{remark}

We prepare the proof of Theorem \ref{thmult} by the following lemma:

\begin{lemma}\label{lem:Lemmad3}
Let $X$ and $Y$ be $\ell$-dimensional random vectors with $\BE X =\BE Y$ and Euclidean norms $||X||$ and $||Y||$ such that
$\BE||X||^2<\infty$, $\BE||Y||^2<\infty$. Then
$$
d_3(X,Y)\le \ell\sqrt{\BE||X||^2+\BE||Y||^2} \sqrt{\BE||X-Y||^2}.
$$
\end{lemma}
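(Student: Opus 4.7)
The task is to bound $|\BE g(X) - \BE g(Y)|$ uniformly over $g \in \mathcal{H}$. The central difficulty is that $\mathcal{H}$ controls only the second and third derivatives of $g$, not the first, so a naive Lipschitz estimate on $g$ is unavailable. The hypothesis $\BE X = \BE Y$ must be used to cancel the unbounded part of $\nabla g$.

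First, by the fundamental theorem of calculus along the segment from $Y$ to $X$,
$$g(X) - g(Y) = \int_0^1 \nabla g(Y + s(X-Y)) \cdot (X-Y)\, ds.$$
Since $\nabla g(0)$ is a deterministic vector and $\BE(X-Y) = 0$, the constant $\nabla g(0)$ may be subtracted from the integrand without changing the expectation:
$$\BE g(X) - \BE g(Y) = \BE \int_0^1 \bigl(\nabla g(Y + s(X-Y)) - \nabla g(0)\bigr) \cdot (X-Y)\, ds.$$
This is the only place the assumption $\BE X = \BE Y$ enters, and it is where all the work happens.

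Second, the bound $|\partial_i \partial_j g| \le 1$ combined with the mean value theorem makes $\nabla g$ globally Lipschitz with constant $\ell$: each component of $\nabla g(u) - \nabla g(v)$ is at most $\sum_i |u_i - v_i| \le \sqrt{\ell}\|u-v\|$ in absolute value, so $\|\nabla g(u) - \nabla g(v)\| \le \ell\|u-v\|$. Applying Cauchy-Schwarz in $\R^\ell$ pointwise inside the integral yields
$$|\BE g(X) - \BE g(Y)| \le \ell\, \BE \int_0^1 \|Y + s(X-Y)\|\,\|X-Y\|\, ds.$$

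Finally, the triangle inequality gives $\|Y + s(X-Y)\| \le s\|X\| + (1-s)\|Y\|$; integration in $s$ produces the factor $(\|X\|+\|Y\|)/2$, and a last application of Cauchy-Schwarz, this time in $L^2(\BP)$, together with $(\|X\|+\|Y\|)^2 \le 2(\|X\|^2 + \|Y\|^2)$ delivers the stated inequality (in fact with the improved constant $\ell/\sqrt{2}$). Taking the supremum over $g \in \mathcal{H}$ completes the proof. The main obstacle is the first step; once the constant part of $\nabla g$ is removed via $\BE(X-Y)=0$, the argument reduces to the Hessian bound and two applications of Cauchy-Schwarz.
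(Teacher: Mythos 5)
Your proof is correct and follows essentially the same route as the paper's: both exploit $\BE(X-Y)=0$ to subtract the constant $\nabla g(0)$ from the gradient term, then control $\nabla g(Z)-\nabla g(0)$ via the uniform Hessian bound and finish with Cauchy--Schwarz. Your use of the integral (rather than Lagrange) form of the mean value theorem is only a cosmetic difference, though it does yield the marginally sharper constant $\ell/\sqrt{2}$ in place of $\ell$.
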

\begin{proof}
For $h\in {\cal H}$ and $X=(X_1,\hdots,X_\ell), Y=(Y_1,\hdots,Y_\ell)$, we obtain by the mean value theorem
\begin{align*}
\left|\BE h(X)-\BE h(Y)\right| =
\left|\BE[h'(Z) (X-Y)]-\BE [h'(0)(X-Y)]\right|,
\end{align*}
where $Z=Y+U(X-Y)$ for some random variable $U$ in $[0,1]$ and where we have used that the components of $X-Y$ all have expectation zero. Applying the mean value theorem again as well as the Cauchy-Schwarz inequality yields
\begin{align*}
\left|\BE h(X)-\BE h(Y)\right|
&=\left|\BE\sum_{i=1}^\ell\left(\frac{\partial h(Z)}{\partial u_i}-
\frac{\partial h(0)}{\partial u_i}\right)(X_i-Y_i)\right|\\
&=\left|\BE\sum_{i=1}^\ell\sum_{j=1}^\ell
\frac{\partial^2 h(\tilde{Z}^{(i)})}{\partial u_j\partial u_i}Z_j(X_i-Y_i)\right|\\
&\le \sqrt{\BE \sum_{i=1}^\ell\left(\sum_{j=1}^\ell\frac{\partial^2
h(\tilde{Z}^{(i)})}{\partial u_j\partial u_i}Z_j\right)^2}\sqrt{\BE||X-Y||^2}
\end{align*}
with random vectors $\tilde{Z}^{(i)}=U_i Z$ and random variables $U_i\in [0,1],$ $i=1,\dots,\ell$. By the fact that $h\in{\cal H}$ and the Cauchy-Schwarz inequality, it follows that
$$
\BE \sum_{i=1}^\ell\left(\sum_{j=1}^\ell\frac{\partial^2 h}{\partial u_j\partial u_i}
(\tilde{Z}^{(i)})Z_j\right)^2\leq \ell^2\,\BE||Z||^2\le \ell^2 \left(\BE||X||^2+\BE||Y||^2\right),
$$
which completes the argument.
\end{proof}

\begin{proof}[Proof of Theorem \ref{thmult}.]
For $i\in\{1,\dots,\ell\}$ define $$\bar{F}^{(i)}_t:=t^{-1/2}I_{1,t}(f_1^{(i)}),\quad t>0,$$ and note that $\CV[\bar{F}^{(i)}_t,\bar{F}^{(j)}_t]=C_{ij}$. Therefore we obtain from \cite[Corollary 4.3]{PeZheng10} that
\begin{align}\label{4.21}
d_3\big((\bar F^{(1)}_t,\dots,\bar F^{(\ell)}_t),N\big)\le
\frac{\ell^2}{4}\sum^\ell_{i=1}t^{-3/2}\int |f_1^{(i)}(x)|^3\,\lambda_t(\dint x)
=c_2 t^{-1/2}
\end{align}
for some $c_2>0$. Lemma \ref{lem:Lemmad3} implies
\begin{align}\label{4.22}
d_3\big((\hat F^{(1)}_t,\dots,\hat F^{(\ell)}_t),(\bar F^{(1)}_t,\dots,\bar F^{(\ell)}_t)\big)
\le A_t^{1/2}B_t^{1/2},
\end{align}
where
\begin{align*}
A_t:=\ell\sum_{i=1}^\ell \BE\big(\hat{F}^{(i)}_t\big)^2+\ell\sum_{i=1}^\ell\BE\big(\bar{F}^{(i)}_t\big)^2,
\qquad
B_t:=\sum_{i=1}^\ell \BE\big(\hat{F}^{(i)}_t-\bar{F}^{(i)}_t\big)^2.
\end{align*}
The first factor $A_t$ is bounded in $t$. For the second factor we use \eqref{4.33} to obtain that
\begin{align*}
B_t=\sum_{i=1}^\ell\BE\Big(\sum^{m_i}_{n=2}t^{-n+1/2} I_{n,t}(f^{(i)}_n)\Big)^2
=\sum_{i=1}^\ell\sum^{m_i}_{n=2}t^{-2n+1}t^{n} \|f^{(i)}_{n}\|^2_{n},
\end{align*}
so that $A_t^{1/2}B_t^{1/2}\le c_3 t^{-1/2}$, $t\ge 1$, for some $c_3>0$. Using this estimate in \eqref{4.22} and combining with
\eqref{4.21} and the triangle inequality for $d_3$, we obtain the result.
\end{proof}

\begin{remark}\rm
The proofs of Theorem \ref{thmult} and the univariate bound discussed in Remark \ref{rem:univariate} depend on general Berry-Esseen type inequalities for Poisson functionals from \cite{PSTU10,PeZheng10}, that are proven in a slightly more restrictive setting, namely that $(\BX,\cX)$ is a Borel space and $\mu$ is non-atomic. But they are still valid without these assumptions since the proofs only make use of properties of the Malliavin operators that also hold in our more general setting as shown in \cite{LaPe11}.
\end{remark}

\section{Poisson flat processes}\label{secflats}

In this section we assume that $\eta_t$ is a {\em stationary} Poisson process {\em of $k$-flats} ($k$-dimensional affine subspaces) in $\R^d$, where $d\ge 1$ and $k\in\{0,\ldots,d-1\}$. This is a Poisson process on the space $A(d,k)$ of all $k$-flats, whose distribution is invariant under translation of the flats. Its distribution is determined by the {\em intensity} $t>0$ and the {\em directional distribution} $\BQ$, a probability measure on the space $G(d,k)$ of all $k$-dimensional linear subspaces of $\R^d$. In fact, the intensity measure $\lambda_t$ of $\eta_t$ equals
\begin{align}\label{Lambda}
\lambda_t(\,\cdot\,)=t\int_{G(d,k)} \int_{E^\perp}
\I\{E+x\in\,\cdot\,\}\,{\cal H}^{d-k}(\dint x)\,\BQ(\dint E),
\end{align}
where ${\cal H}^{d-k}$ denotes $(d-k)$-dimensional Hausdorff measure, and $\BQ$ is a probability measure on $G(d,k)$. We let $\lambda:=\lambda_1$. If $\BQ$ is the uniform distribution (Haar measure), then $\eta_t$ is {\em isotropic}, that is, distributionally invariant under rotations. For further details on Poisson flat processes we refer to \cite{SW08}.

The {\em intersection process} of order $m\in\N$ is given as the set of all intersections $E_1\cap\dots \cap E_m$  of $m$ pairwise different flats in $\eta_t$. To introduce our geometric functionals of the latter process we let $\cC^d$ denote the system of all compact subsets of $\R^d$, equipped with the Borel $\sigma$-field induced by the Fell topology, see e.g.\ \cite[Chapter 12]{SW08}. We consider a measurable family $\cC^d_0\subset \cC^d$ of sets containing the empty set $\emptyset$ and with the property that $rB\cap E\in \cC^d_0$ for all $B\in\cC^d_0$, all $r>0$, and all affine subspaces $E\subset\R^d$. We assume that $\psi:\cC^d_0\rightarrow\R$ is a measurable function with $\psi(\emptyset)=0$ satisfying
\begin{align}\label{boundedweak}
 \int |\psi(B\cap E_1\cap\dots \cap E_m)|^3\,
\lambda^{m}\big(\dint(E_1,\dots,E_m)\big) \le C_B
\end{align}
for all $B\in\cC^d_0$, where $C_B\ge 0$ is a constant only depending on $B$. (By \cite[Theorem 12.2.6]{SW08} the mapping  $(E_1,\dots,E_m)\mapsto B\cap E_1\cap\dots \cap E_m$ is measurable.) We note here that \eqref{Lambda} implies that $\lambda$ is {\em
locally finite} in the sense that
$\lambda(\{E\in A(d,k):B\cap E\ne\emptyset\})<\infty$ for all
$B\in\cC^d$.
Since $\psi(\emptyset)=0$, assumption \eqref{boundedweak} implies
the integrability of $|\psi(B\cap E_1\cap\dots \cap E_m)|^p$ w.r.t.\
$\lambda^m$ for any $p\in(0,3]$.
This is enough to settle all integrability issues in this section.
Clearly \eqref{boundedweak} is implied by the stronger condition
\begin{align}\label{bounded}
  |\psi(B\cap E_1\cap\dots \cap E_m)|\le c_B, \quad
\lambda^m\text{-a.e.\ $(E_1,\dots,E_m)$}, B\in\cC_0^d
\end{align}
for some $c_B\ge 0$ depending on $B$. In particular, \eqref{bounded} is satisfied in our examples below.

Define a random field $\zeta_t:=\{\zeta_t(B):B\in \cC^d_0\} $ by
\begin{align*}
  \zeta_t(B):=\frac{1}{m!}\int \psi(B\cap E_1\cap\dots \cap E_m)\,
\eta_t^{(m)}\big(\dint(E_1,\ldots,E_m)\big),\qquad B\in\cC^d_0.
\end{align*}
Since $\eta_t$ has only atoms of size one (by \eqref{Lambda}) we can identify $\eta_t$ with its support, and integration with respect to $\eta_t^{(m)}$ corresponds to summation over all $m$-tuples $(E_1,\dots,E_m)\in\eta_t^m$ with pairwise different entries. For $A,B\in\cC^d_0$ define
\begin{equation}\label{cov}
\begin{split}
C(A,B):=&\frac{1}{((m-1)!)^2}
\int\Big(\int \psi(A\cap E_1\cap E_2\cap \dots \cap E_m)\,
\lambda^{m-1}\big(\dint(E_2,\dots,E_m)\big)\Big)\\
&\times\Big(\int \psi(B\cap E_1\cap E'_2\cap \dots \cap E'_m)\,
\lambda^{m-1}\big(\dint(E'_2,\dots, E'_m)\big)\Big)\,\lambda(\dint E_1).
\end{split}
\end{equation}
If $m=1$, this has to be read as
\begin{align*}
C(A,B)=
\int \psi(A\cap E_1)\psi(B\cap E_1)\,\lambda(\dint E_1).
\end{align*}
It can be checked directly that $C(\cdot,\cdot)$ is positive semidefinite. Therefore we can consider a centred Gaussian field $\xi:=\{\xi(B):B\in \cC^d_0\}$ with this covariance function.

Define
\begin{align*}
\hat\zeta_t(B):=t^{-(m-1/2)}(\zeta_t(B)-\BE \zeta_t(B)),\quad t>0,B\in\cC^d_0.
\end{align*}

\begin{theorem}\label{cltflats}
Let $\ell\ge 1$ and $B_1,\dots,B_\ell\in\cC^d_0$. Then
\begin{align*}
  d_3\big((\hat\zeta_t(B_1),\dots,\hat\zeta_t(B_\ell)),(\xi(B_1),\dots,\xi(B_\ell))\big)\le
c(B_1,\dots,B_\ell)t^{-1/2},\quad t\ge 1,
\end{align*}
for some constant $c(B_1,\dots,B_\ell)$. In particular
\begin{align*}
\{\hat\zeta_t(B):B\in \cC^d_0\}
\overset{d}{\longrightarrow} \{\xi(B):B\in \cC^d_0\} \quad \text{as $t\to\infty$}
\end{align*}
in the sense of convergence of finite-dimensional distributions.
\end{theorem}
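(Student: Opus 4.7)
The plan is to realize each $\zeta_t(B_i)$ as a Poisson U-statistic of the form \eqref{Ustatistic}, verify the hypotheses of Theorem \ref{thmult}, and identify the resulting asymptotic covariance with $C(B_i,B_j)$. Concretely, I would set $m_i := m$, $g_i \equiv 1$ and
$$f^{(i)}(E_1,\dots,E_m) := \frac{1}{m!}\,\psi(B_i \cap E_1 \cap \dots \cap E_m),$$
so that $\zeta_t(B_i) = \int f^{(i)}(E_1,\dots,E_m)\,\eta_t^{(m)}(\dint(E_1,\dots,E_m))$ matches \eqref{Ustatistic}. Symmetry of $f^{(i)}$ is immediate from commutativity of intersection, and because $g_i \equiv 1$ the normalization $\hat F^{(i)}_t$ from \eqref{def:fhat} reduces to $\hat\zeta_t(B_i)$.

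For the integrability hypotheses, let $A_i := \{E \in A(d,k) : E \cap B_i \ne \emptyset\}$. The representation \eqref{Lambda} yields $\lambda(A_i) < \infty$ (local finiteness of $\lambda$), and since $\psi(\emptyset)=0$ the kernel $f^{(i)}$ is supported on $A_i^m$. Combined with the remark after \eqref{boundedweak} that $|\psi(B_i\cap E_1\cap\dots\cap E_m)|^p$ is $\lambda^m$-integrable for every $p\in(0,3]$, this gives $f^{(i)} \in L^1_s(\lambda^m)\cap L^2(\lambda^m)$. A Hölder argument applied to the definition \eqref{fn} of the contracted kernel, using the finite measure $\lambda(A_i)$ to handle the ``extra'' variables, then yields $f^{(i)}_n \in L^2(\lambda^n)$ for every $n\le m$, as well as the stronger estimate
$$\int |f^{(i)}_1|^3 \, \dint\lambda \le \frac{\lambda(A_i)^{2(m-1)}}{((m-1)!)^3}\int|\psi(B_i\cap E_1\cap\dots\cap E_m)|^3 \, \dint\lambda^m < \infty$$
via Hölder with conjugate exponents $3$ and $3/2$.

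Inserting our choice of $f^{(i)}$ into \eqref{fn} gives
$$f^{(i)}_1(E_1) = \frac{1}{(m-1)!}\int \psi(B_i\cap E_1\cap E_2\cap\dots\cap E_m)\,\lambda^{m-1}(\dint(E_2,\dots,E_m)),$$
and hence $\langle f^{(i)}_1,f^{(j)}_1\rangle_1$ equals $C(B_i,B_j)$ from \eqref{cov}; the special case $m=1$ fits automatically since then $f^{(i)}_1=\psi(B_i\cap\,\cdot\,)$. Thus the Gaussian vector $N$ in Theorem \ref{thmult} with covariance $(C_{ij})$ has the same law as $(\xi(B_1),\dots,\xi(B_\ell))$, and Theorem \ref{thmult} directly furnishes the claimed quantitative $d_3$-bound for $t\ge 1$. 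Convergence of finite-dimensional distributions of $\{\hat\zeta_t(B):B\in\cC^d_0\}$ follows because $d_3$-convergence implies convergence in distribution and the tuple $(B_1,\dots,B_\ell)\in(\cC^d_0)^\ell$ is arbitrary.

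The main obstacle, while not deep, is the careful Hölder bookkeeping that certifies $f^{(i)}_n\in L^2(\lambda^n)$ and $f^{(i)}_1\in L^3(\lambda)$ from the geometric hypothesis \eqref{boundedweak}: the interplay between the support confinement to $A_i^m$ (coming from local finiteness and $\psi(\emptyset)=0$) and the $L^p$-estimate provided by \eqref{boundedweak} has to be executed correctly, and one must keep track of the combinatorial constants in the definition of $f^{(i)}_n$. Once this is settled, the identification of the limiting covariance with $C(B_i,B_j)$ and the appeal to Theorem \ref{thmult} are essentially routine.
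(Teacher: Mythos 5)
Your proposal is correct and follows exactly the paper's route: the paper's entire proof of Theorem \ref{cltflats} is the one-line statement that it is a direct consequence of Theorem \ref{thmult}, and you have simply supplied the (correct) routine verifications -- identifying the kernel $f^{(i)}=\frac{1}{m!}\psi(B_i\cap E_1\cap\dots\cap E_m)$, checking the integrability hypotheses via local finiteness of $\lambda$ together with \eqref{boundedweak}, and matching $\langle f^{(i)}_1,f^{(j)}_1\rangle_1$ with $C(B_i,B_j)$ -- that the paper leaves implicit.
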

\begin{proof} This is a direct consequence of Theorem \ref{thmult}.\end{proof}

\bigskip

Alternatively one can approach the central limit problem in another but closely related setting. Instead of increasing the intensity parameter $t$, we can also fix $t$ (for simplicity we take $t=1$) and increase the size $r$ of the observation window. If we assume additionally that the considered function $\psi$ is homogeneous of degree $\alpha\in\R$, that is
\begin{align}\label{homo}
\psi(rB)=r^{\alpha}\psi(B),\quad B\in\cC^d_0,\, r>0,
\end{align}
both approaches are equivalent. Define a random field $\tilde\zeta_r:=\{\tilde\zeta_r(B): B\in\cC_0^d\}$ with $\tilde\zeta_r(B)=r^{-(m-1/2)(d-k)-\alpha}(\zeta_1(rB)-\BE\zeta_1(rB))$.

\begin{corollary}\label{cltflats2}
Assume \eqref{homo}, let $\ell\ge 1$ and $B_1,\dots,B_\ell\in\cC^d_0$. Then
\begin{align*}
  d_3\big((\tilde\zeta_r(B_1),\dots,\tilde\zeta_r(B_\ell)),(\xi(B_1),\dots,\xi(B_\ell)\big)\le
c(B_1,\dots,B_\ell)r^{-(d-k)/2},\quad r\ge 1,
\end{align*}
for some constant $c(B_1,\dots,B_\ell)$. In particular
\begin{align*}
\{\tilde\zeta_r(B):B\in \cC^d_0\}
\overset{d}{\longrightarrow} \{\xi(B):B\in \cC^d_0\} \quad \text{as $r\to\infty$}
\end{align*}
in the sense of finite-dimensional distributions.
\end{corollary}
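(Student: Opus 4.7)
The plan is to reduce the statement to Theorem \ref{cltflats} by a scaling argument, exploiting the stationarity of the Poisson $k$-flat process together with the homogeneity of $\psi$. The key observation is that rescaling the ambient space transforms the intensity of the flat process in a controlled way, so that $\zeta_1$ evaluated on a dilated window is, in distribution, a rescaled version of $\zeta_t$ evaluated on the original window with an appropriately enlarged intensity parameter $t$.

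More concretely, I would first record the scaling property of $\eta_1$. For $r>0$ let $T_r:A(d,k)\to A(d,k)$ be the map $E\mapsto r^{-1}E$. From the representation \eqref{Lambda} and the fact that ${\cal H}^{d-k}$ scales as $r^{d-k}$ on the $(d-k)$-dimensional orthogonal complement $L^\perp$, one sees that the image measure of $\lambda$ under $T_r$ equals $r^{d-k}\lambda$. Since $\eta_1$ is a Poisson process with intensity measure $\lambda$, it follows by the mapping theorem that the point process $\eta'_r:=T_r(\eta_1)=\{r^{-1}E:E\in\eta_1\}$ is a Poisson process with intensity measure $r^{d-k}\lambda$, and hence $\eta'_r\stackrel{d}{=}\eta_{r^{d-k}}$ on $A(d,k)$.

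Next I would transfer this to $\zeta_1(rB)$. For any flats $E_1,\dots,E_m\in A(d,k)$ the identity $rB\cap E_1\cap\dots\cap E_m=r(B\cap r^{-1}E_1\cap\dots\cap r^{-1}E_m)$ and the homogeneity assumption \eqref{homo} give $\psi(rB\cap E_1\cap\dots\cap E_m)=r^{\alpha}\psi(B\cap r^{-1}E_1\cap\dots\cap r^{-1}E_m)$. Substituting this into the definition of $\zeta_1(rB)$ and applying the change of variables $E_i\mapsto T_r(E_i)=r^{-1}E_i$ (which turns $\eta_1^{(m)}$ into $(\eta'_r)^{(m)}$), one obtains, jointly for all $B\in\cC_0^d$, the distributional identity
\begin{align*}
\{\zeta_1(rB):B\in\cC_0^d\}\stackrel{d}{=}\{r^{\alpha}\zeta_{r^{d-k}}(B):B\in\cC_0^d\}.
\end{align*}
Taking expectations in the above equality and then subtracting, I conclude that with $t:=r^{d-k}$ the rescaled fields satisfy
\begin{align*}
\{\tilde\zeta_r(B):B\in\cC_0^d\}\stackrel{d}{=}\{\hat\zeta_{r^{d-k}}(B):B\in\cC_0^d\},
\end{align*}
since the prefactor $r^{-(m-1/2)(d-k)-\alpha}\cdot r^\alpha=(r^{d-k})^{-(m-1/2)}=t^{-(m-1/2)}$ exactly matches the normalisation in the definition of $\hat\zeta_t$.

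Finally, I would plug this distributional identity into Theorem \ref{cltflats}. Since the (pseudo-)metric $d_3$ depends only on the joint distribution of the random vector, for any $B_1,\dots,B_\ell\in\cC_0^d$ we get
\begin{align*}
d_3\bigl((\tilde\zeta_r(B_1),\dots,\tilde\zeta_r(B_\ell)),(\xi(B_1),\dots,\xi(B_\ell))\bigr)
&=d_3\bigl((\hat\zeta_{r^{d-k}}(B_1),\dots,\hat\zeta_{r^{d-k}}(B_\ell)),(\xi(B_1),\dots,\xi(B_\ell))\bigr)\\
&\le c(B_1,\dots,B_\ell)\,(r^{d-k})^{-1/2}=c(B_1,\dots,B_\ell)\,r^{-(d-k)/2}
\end{align*}
for all $r\ge 1$, which is the bound asserted in the corollary. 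Convergence of finite-dimensional distributions is then immediate since $d_3$-convergence implies weak convergence. The only nontrivial step in the whole argument is the scaling identity $T_r(\eta_1)\stackrel{d}{=}\eta_{r^{d-k}}$; all other manipulations are bookkeeping of exponents, and I do not anticipate a substantial obstacle.
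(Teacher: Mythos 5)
Your proof is correct and follows essentially the same route as the paper: both rest on the scaling property $r^{-1}\eta_1\stackrel{d}{=}\eta_{r^{d-k}}$ derived from \eqref{Lambda}, combined with the homogeneity \eqref{homo}, to identify $\tilde\zeta_r$ in distribution with $\hat\zeta_{r^{d-k}}$ and then invoke Theorem \ref{cltflats} (the paper phrases this as an application of Theorem \ref{thmult} with a modified prefactor $g$, which is equivalent). Your bookkeeping of the intensity parameter, $t=r^{d-k}$, is the one consistent with the stated rate $r^{-(d-k)/2}$.
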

\begin{proof}
The special structure \eqref{Lambda} of the intensity measure $\lambda$ implies the well-known scaling property
\begin{align*}
\BP(\eta_t\in\cdot)=\BP(t^{-1/(d-k)}\eta_1\in\cdot),\quad t>0,
\end{align*}
where $a\eta_1:=\{aE:E\in\eta_1\}$ for $a>0$. Since $\psi$ is homogeneous we obtain for all $B\in\cC^d_0$ and $r>0$ that
\begin{align*}
\zeta_1(rB)&=\frac{1}{m!}r^{\alpha}\int \psi(B\cap r^{-1}E_1\cap\dots \cap r^{-1} E_m)\,
\eta^{(m)}_1\big(\dint(E_1,\ldots,E_m)\big)\notag \\
&=\frac{1}{m!}r^{\alpha}\int \psi(B\cap E_1\cap\dots \cap  E_m)\,
\eta^{(m)}_{r^{1/(d-k)}}\big(\dint(E_1,\ldots,E_m)\big),
\end{align*}
where the second identity holds in distribution jointly in $B$. Hence, we can apply Theorem \ref{thmult} with $g_1(t)=\dots=g_\ell(t):=(m!)^{-1}t^{\alpha(d-k)}$ and then replace $t$ by $r^{1/(d-k)}$.
\end{proof}

\begin{remark}\label{rcovflat}\rm It follows from \eqref{cvij} (with $g_i(t)=g_j(t)=1/m!$) that
\begin{align*}
\CV[\zeta_t(A),\zeta_t(B)]=\sum^m_{n=1}\frac{1}{n!((m-n)!)^2}V_t(A,B,n),\quad A,B\in\cC^d_0,
\end{align*}
where
\begin{align*}
V_t&(A,B,n):=t^{2m-n}
\int\left[\int \psi(A\cap E_1\cap\dots \cap E_n\cap E_{n+1}\cap\dots \cap E_m)\,
\lambda^{m-n}\big(\dint(E_{n+1},\ldots,E_m)\big)\right.\\
&\left.\int \psi(B\cap E_1\cap\dots \cap E_n\cap E_{n+1}\cap\dots \cap E_m)\,
\lambda^{m-n}\big(\dint(E_{n+1},\ldots,E_m)\big)\right]\,
\lambda^{n}\big(\dint(E_{1},\ldots,E_n)\big).
\end{align*}
In accordance with Theorem \ref{cltflats} we therefore obtain
\begin{align*}
\lim_{t\to\infty}t^{-(2m-1)}\CV[\zeta_t(A),\zeta_t(B)]=C(A,B).
\end{align*}
\end{remark}

We now present a couple of examples to which Theorem \ref{cltflats}
as well as Corollary \ref{cltflats2} can be applied.

\begin{example}\label{ex1}\rm Assume that $m(d-k)\le d$.
Assume further that $\cC^d_0=\cC^d$
and that $\psi$ is the $(d-m(d-k))$-dimensional
Hausdorff measure on $\R^d$ restricted to $\cC^d$. Then \eqref{homo}
holds with $\alpha=d-m(d-k)$. Assumption \eqref{bounded} holds
because for $\lambda^{m}$-a.e.\ $(E_1,\dots,E_m)\in A(d,k)^m$
the intersection $E_1\cap\dots\cap E_m$ is either empty
or has dimension $d-m(d-k)$. This follows (recursively) from the argument
given in \cite[p.\ 130]{SW08}.
%
\end{example}

\begin{example}\label{ex2}\rm
Assume that $\cC^d_0=\cC^d$ and that $\psi(B)=\I\{B\ne \emptyset\}$.
Then \eqref{homo} holds with $\alpha=0$ while \eqref{bounded} holds  with $c_B=1$.
\end{example}

Examples \ref{ex1} and \ref{ex2} have been studied in \cite{Hein09,HeinSS06} in the case $k=d-1$. Our results add to Theorem 3.1 and Theorem 4.1 in \cite{Hein09} in several ways. While the latter results are multivariate central limit theorems for the $d$ possible values of the number $m$ of intersections but a fixed (convex) test set $B$, we fix $m$ but study $\zeta_t(B)$ (respectively $\zeta_1(rB)$) as a function of $B$. Further we consider processes of flats and not only hyperplanes. Moreover we obtain  Berry-Esseen-type bounds on the distance $d_3$ and can allow for a considerably larger class of functionals $\psi$. It is also possible to apply Theorem \ref{thmult} to the vector-valued processes arising by varying $m$. This would constitute a complete generalization of \cite{Hein09}. In order  to avoid heavy notation we have refrained from doing so.

We continue with further examples of functionals $\psi$ satisfying \eqref{bounded} and \eqref{homo}. The {\em convex ring} $\cR^d$ is the system of all (possible empty) unions of convex and compact subsets of $\R^d$.

\begin{example}\label{ex3}\rm
Assume that $\cC^d_0=\cR^d$ and that $\psi$ is the {\em intrinsic volume} $V_\alpha$, where $\alpha\in\{0,\dots,d\}$, see e.g.\ \cite{SW08}. Then $\eqref{homo}$ holds. Assumption \eqref{bounded} follows from the fact that $V_\alpha(B\cap E)\le V_\alpha(B)$
for any convex and compact $B\subset\R^d$ and any affine subspace $E\subset\R^d$. By additivity of $V_\alpha$ (see e.g.\ \cite[Section 14.2]{SW08}) the inequality \eqref{bounded} can be established for the whole convex ring.
\end{example}

In contrast to the previous examples, the next functionals are not additive.

\begin{example}\label{ex4}\rm
Assume that $\cC^d_0=\cR^d$ and $\alpha\in\{0,\dots,d-1\}$. Let $\Theta_\alpha(A,\cdot)$ be the {\em support measure} of $A\in \cR^d$, see \cite[Section 14.2]{SW08}. This is a signed measure on the product of $\R^d$ and the unit sphere $\mathbb{S}^{d-1}$ such that
$\Theta_\alpha(A,\R^d\times \mathbb{S}^{d-1})=V_\alpha(A)$. Fix a measurable set $U\subset \mathbb{S}^{d-1}$ and assume that
$$
\psi(A)=\int \I\{(x,u)\in N(A), u\in U\}\,\Theta_\alpha\big(A,\dint(x,u)\big),
$$
where $N(A)$ is the {\em unit normal bundle} of $A$. This consists of all pairs $(x,u)\in\R^d\times \mathbb{S}^{d-1}$ that occur as unique nearest point and associated direction of a point in the complement of $A$, see \cite{HuLaWeil04}. The homogeneity \eqref{homo} follows from the homogeneity of the (non-negative) measure $\I\{(x,u)\in N(A)\}\Theta_\alpha(A,d(x,u))$, see \cite[Proposition 4.9]{HuLaWeil04}. Assumption \eqref{bounded} follows similarly as in Example \ref{ex3} from the additivity of $\Theta_\alpha(A,\cdot)$ in $A\in\cR^d$.
\end{example}

\begin{example}\label{ex5}\rm
Consider the case where $\cC_0^d$ is the space of compact convex subsets of $\R^d$, fix $\alpha\in\{0,\ldots,d\}$ and $\beta\geq 0$ and let $\psi$ be $V_\alpha^\beta$, the power $\beta$ of the intrinsic volume of order $\alpha$. In the case $\alpha=1$ and $\beta=n\in\N$, $\psi$ corresponds to the $n$-th chord-power integral, which is frequently studied in integral geometry, cf.\ \cite[Chapter 8.6]{SW08}. Clearly, \eqref{homo} is satisfied with $\alpha\beta$ there and assumption \eqref{bounded} follows as in
Example \ref{ex3} from the fact that $V_\alpha^\beta(B\cap E)\leq V_\alpha^\beta(B)$ for any convex and compact $B\subset\R^d$ and any affine subspace $E\subset\R^d$.
\end{example}

\begin{remark}\label{r4}\rm If $\psi\ge 0$, then $\CV[\zeta_t(A),\zeta_t(B)]\ge 0$
and $C(A,B)\ge 0$ for all $A,B\in\cC^d_0$. This is the case in Examples \ref{ex1},
\ref{ex2}, and \ref{ex5}. Taking as $\cC^d_0$ the system of convex sets,
this is also the case in Examples \ref{ex3} and \ref{ex4}.
If additionally $m\geq 2$, Corollary \ref{c2a} shows
that the moment generating functions of the functionals under consideration
do not exist.
\end{remark}

\subsection*{Acknowledgement}
The authors would like to thank two anonymous referees for a number of valuable comments, which were helpful for us to improve the text.

\end{document}